\documentclass[12pt,a4paper]{article}
\frenchspacing
\usepackage{amsthm}
\usepackage{epsfig}
\usepackage{alltt}
\usepackage[latin1]{inputenc}
\usepackage{makeidx}
\usepackage{newlfont}
\usepackage{amsmath}
\usepackage{amssymb}
\usepackage{amsfonts}
\usepackage{amscd}
\usepackage{enumerate}
\usepackage{mathrsfs}
\usepackage{ifthen}


\newtheorem{theorem} {\bf Theorem} [section]
\newtheorem{lemma}[theorem]{\bf Lemma}
\newtheorem{prop}[theorem]{\bf Proposition}

\theoremstyle{remark}

\newtheorem{remark}[theorem]{Remark}



\def\phi{\varphi}
\def\epsilon{\varepsilon}
\def\theta{\vartheta}


\newcommand{\mcl}{{\mathscr{L}}}

\newcommand{\mcm}{{\mathscr{M}}}


\newcommand{\graph}{{\mathrm{graph}}}

\newcommand{\Lip}{{\mathrm{Lip}}}

\newcommand{\spt}{\operatorname{\mathrm spt}}

\newcommand{\lspan}{\operatorname{\mathrm span}}

\newcommand{\vol}{\operatorname{\mathrm vol}}

\newcommand{\rank}{\operatorname{\mathrm rk}}

\newcommand{\pr}{\operatorname{\mathrm pr}}


\newcommand{\loc}{\mathrm{loc}}

\newcommand{\Bl}[1]{{\mathbb{#1}}}

\newcommand{\DR}{\Bl{R}}
\newcommand{\DZ}{\Bl{Z}}

\newcommand{\DN}{\Bl{N}}

\newboolean{comment}
\newcommand{\comline}[1]{\ifthenelse{\boolean{comment}}{{\bf
      \noindent\shortstack[r]{\rule{0cm}{0.5cm}\\
      #1\\\rule{16cm}{0.02\pagestyle{myheadings}
 \markboth{Links}{Rechts}
 5cm}}\\}}}

\newcommand{\comtxt}[1]{\ifthenelse{\boolean{comment}}{{\bf #1} \\}}

\newcommand{\commar}[1]{\ifthenelse{\boolean{comment}}{\marginpar{{#1}}}{}}

\setboolean{comment}{true}          

\newcommand{\btxvolumelong }[1]{\relax}

\newcommand{\btxofserieslong }[1]{\relax}

\newcommand{\btxandlong }[1]{and}


\begin{document}

\author{Hannes Junginger-Gestrich\thanks{Address: {\sf Mathematisches Institut,
      Albert-Ludwigs-Universit\"at Freiburg, Abteilung f\"ur Reine Mathematik,
      Eckerstra{\ss}e 1, 79104 Freiburg im Breisgau (Germany)}, {\tt
      hannes.junginger-gestrich@math.uni-freiburg.de}.  This work was partially
    supported by the DFG Graduiertenkolleg ``Nichtlineare
    Differentialgleichungen''}}

\title{A Morse type uniqueness theorem for non-parametric minimizing
  hypersurfaces}
\maketitle
\begin{abstract}
  A classical result about minimal geodesics on $\DR^2$ with $\DZ^2$ periodic
  metric that goes back to H.M.~Morse's paper \cite{Mo} asserts that a minimal
  geodesic that is asymptotic to a periodic minimal geodesic cannot intersect
  any periodic minimal geodesic of the same period. This paper treats a similar
  theorem for nonparametric minimizing hypersurfaces without selfintersections
  -- as were studied by J.~Moser, V.~Bangert, P.H.~Rabinowitz, E.~Stredulinsky
  and others.
\end{abstract}
\section{Introduction}\label{graphintro}
The first progress to generalize the results of Morse \cite{Mo} and
G.A.~Hedlund \cite{He} -- who studied the case of $\DR^2$ with $\DZ^2$-periodic
metric -- on minimal geodesics on surfaces to higher dimension was made by Moser
\cite{Mos}. He observed that the key features of minimal geodesics on $T^2$ are
that they separate space and that they do not have selfintersections when
projected to $T^2 = \DR^2/DZ^2$. We point out that this last property is not
contained in the classical text and was proven in \cite{Ba88}. 

Amongst other theorems some of the classical results were generalized by Moser
to graphs of functions $u:\DR^n \to \DR$, which are minimizers of a
$\DZ^{n+1}$-periodic variational problem and are {\em without
  selfintersections}.  Below the setting is described precisely. Moser obtained
an a priori estimate that asserts that any such graph stays within universally
bounded Hausdorff distance to a plane, and he proved first existence results,
namely that for any given unit vector $\alpha \in\DR^{n+1}$ there exists such a
graph that is within finite Hausdorff distance to a plane with unit normal
$\alpha$.  H.~Koch, R.~de la Llave and C.~Radin, cf.~\cite{KodlLRa}, obtain
results of this type for functions on lattices.  A.~Candel and de la Llave
provide versions for functions on sets with more general group actions in
\cite{CadlL}.  In the framework of Moser, Bangert proves a fundamental
uniqueness result in \cite{Ba87} and he carries out a detailed investigation of
the minimal solutions in this framework in \cite{Ba89}. These result are
considered as a codimension one version of {\em Aubry-Mather Theory}. Together
with E.~Valdinoci we observed in \cite{JuVa} that the results in \cite{Ba89} are
related to a famous conjecture of E.~de Giorgi.  P.H.~Rabinowitz and
E.~Stredulinsky also investigated the Moser framework in \cite{RaSt03},
\cite{RaSt04a} and \cite{RaSt04b}. They utilize a renormalized functional and
find more complicated extremals -- so called multibump solutions.

A central point in \cite{Ba89} is Theorem \ref{graphunique}, cf.~\cite[Theorem
(6.6)]{Ba89}, however the proof given there is incomplete. With minor variations
we adopt the notation of \cite{Ba89} and give a completion of the proof. Our
strategy is inspired by Morse's proof. In \cite{Ju} we proved a version of this
theorem for parametric minimizing hypersurfaces, cf.~also \cite{Jupar}. Although
it is possible to prove the parametric result carrying over the method used
here, it is simpler and more natural to use the theory of (weak) calibrations.
It is an open question whether there exists a suitable concept of calibration
calibrating a given totally ordered family of nonselfintersecting minimizing
graphs. It would be desirable to find a calibration that is $\DZ^n$-invariant.

\subsection{Moser's variational problem and basic results}
Given an integrand $F: \DR^n\times\DR\times\DR^n\to \DR$, periodic in the first
$n+1$ variables, we study functions $u:\DR^n \to \DR$ that minimize the integral
$\int F(x,u,u_x)\,dx$ w.r.t. compactly supported variations. We assume $F \in
C^{2,\epsilon}(\DR^{2n+1})$ and that $F$ satisfies appropriate growth
conditions, cf.~\cite[(3.1)]{Mos}, ensuring the ellipticity of the corresponding
Euler-Lagrange equation. Under these conditions minimizers inherit regularity
from $F$ and are of class $C^{2,\epsilon}(\DR^n)$.  For $u:\DR^{n+1} \to \DR$
and $\bar k = (k,k') \in \DZ^{n+1}$, define $T_{\bar k}u:\DR^n\to \DR$ as
$$T_{\bar k}u(x) = u(x-k)+ k'\,.$$
Since $F$ is $\DZ^{n+1}$-periodic, $T$ determines a $\DZ^{n+1}$-action on the
set of minimizers.

We look at minimizers $u$ {\em without self-intersections}, i.e. for all $\bar k
\in \DZ^{n+1}$ either $T_{\bar k}u < u$ or $T_{\bar k}u = u$ or $T_{\bar k}u>
u$.  Equivalently one can require that the hypersurface $\graph (u) \subset
\DR^{n+1}$ has no self-intersections when projected into $T^{n+1} =
\DR^{n+1}/\DZ^{n+1}$.

We call minimizers without self-intersections shortly {\em solutions} and denote
the set of all solutions by $\mcm$. On $\mcm$ we consider the
$C^1_\loc$-topology.  For every $u \in \mcm$ \cite[Theorem 2.1]{Mos} shows that
$\graph(u)$ lies within universally bounded distance from a hyperplane.  We
define the {\em rotation vector} of $u$ is as the unit normal $\bar a_1(u) \in
\DR^{n+1}$ to this hyperplane, which has positive inner product $\bar a_1 \cdot
\bar e_{n+1}$ with the $(n+1)$st standard coordinate vector.\footnote{We remark
  that our notion of rotation vector differs slightly from this notion in
  \cite{Ba89}.} Another fundamental result of Moser, cf.~\cite[Theorem
3.1]{Mos}, implies that every $u \in \mcm$ is Lipschitz with constant depending
only on $\bar a_1(u)$ (and $F$).

If $\bar k \cdot \bar a_1$ is $>0 \;(<0)$, then $T_{\bar k}u> u \;(<u)$.  If
$\bar k \cdot \bar a_1 = 0$, both cases are possible. There is a complete
description in \cite[(3.3)--(3.7)]{Ba89}, that we subsume in
  \begin{prop}\label{graphbasic}
    For every $u \in \mcm$ there exists an integer $t = t(u) \in
    \{1,\ldots,n+1\}$ and unit vectors $\bar a_1= \bar a_1(u),\ldots,\bar a_t =
    \bar a_t(u)$, such that for $1\le s \le t$ we have
    \begin{align}\label{adm}
      \begin{split} 
        \bar a_s \in \lspan \bar \Gamma_s\,, \quad &\mbox{where } \bar \Gamma =
        \bar \Gamma_1 = \DZ^{n+1} \mbox{ and }\\&\bar \Gamma_s = \bar
        \Gamma_s(u) \mathrel{\mathop:}= \DZ^{n+1} \cap \lspan\{\bar a_1,\ldots,
        \bar a_{s-1}\}^\bot\,,
      \end{split}
    \end{align} and the $\bar a_1,\ldots,\bar a_t$ are uniquely determined by
    the following properties:
    \begin{enumerate}[{\em (i)}]
    \item $T_{\bar k} u > u$ if and only if there exists $1 \le s \le t$ such
      that $\bar k \in \bar \Gamma_s$ and $\bar k \cdot \bar a_s > 0$.
    \item $T_{\bar k}u = u$ if and only if $\bar k \in \bar \Gamma_{t+1}$.
    \end{enumerate}
  \end{prop}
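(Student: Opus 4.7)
My strategy is to build the sequence $\bar a_1, \ldots, \bar a_t$ by induction on $s$, taking the rotation vector of Moser for the base step and a hyperplane-separation argument in the quotient $V_s / \lspan \bar \Gamma^0$ for the inductive step.

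I first set up the algebraic framework. Let $\bar \Gamma^+ = \{\bar k \in \DZ^{n+1} : T_{\bar k} u > u\}$, $\bar \Gamma^- = \{\bar k : T_{\bar k} u < u\}$, and $\bar \Gamma^0 = \{\bar k : T_{\bar k} u = u\}$, so that $\DZ^{n+1} = \bar \Gamma^+ \sqcup \bar \Gamma^0 \sqcup \bar \Gamma^-$ because $u$ has no self-intersections. Since $T$ acts by order-preserving operations, $\bar \Gamma^+$ is a subsemigroup and $\bar \Gamma^0$ is a subgroup, and iterating $T_{\bar k}$ shows that $n \bar k \in \bar \Gamma^0$ with $n \geq 1$ forces $\bar k \in \bar \Gamma^0$. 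For the base case $s=1$ I take $\bar a_1$ to be the rotation vector; the bounded Hausdorff distance of $\graph(u)$ from a hyperplane with normal $\bar a_1$ implies $T_{m \bar k} u > u$ for sufficiently large $m \in \DN$ whenever $\bar k \cdot \bar a_1 > 0$, and by iterating $T_{\bar k}$ this already forces $T_{\bar k} u > u$.

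For the inductive step, assume $\bar a_1, \ldots, \bar a_{s-1}$ are constructed and $\bar \Gamma^0 \subset \bar \Gamma_s$, and set $\bar \Gamma_s^+ := \bar \Gamma_s \cap \bar \Gamma^+$. If $\bar \Gamma_s^+ = \emptyset$ I stop and put $t = s-1$. Otherwise, let $V_s = \lspan \bar \Gamma_s$ and consider the image $G_s^+$ of $\bar \Gamma_s^+$ in the quotient $V_s / \lspan \bar \Gamma^0$. The key algebraic claim is that the convex conical hull of $G_s^+$ meets its negative only at $0$: a nonzero common element would produce a positive real relation $\sum \alpha_m \bar p_m \in \lspan \bar \Gamma^0$ with $\alpha_m > 0$ and $\bar p_m \in \bar \Gamma_s^+$; a rational approximation (exploiting that $\lspan \bar \Gamma^0$ is a rational subspace of $\DR^{n+1}$) yields positive integers $n_m$ with $\bar q' := \sum n_m \bar p_m \in \DZ^{n+1} \cap \lspan \bar \Gamma^0$, and since this integer lattice has the same rank as $\bar \Gamma^0$, some multiple $N \bar q'$ lies in $\bar \Gamma^0$. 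This is contradictory: $T_{N \bar q'} u = u$ by membership in $\bar \Gamma^0$, while iterating the strict inequality $T_{\bar p_m} u > u$ gives $T_{N \bar q'} u > u$. Hahn--Banach separation in the quotient then produces a unit vector $\bar a_s \in V_s \cap (\lspan \bar \Gamma^0)^\perp$ with $\bar k \cdot \bar a_s \geq 0$ on $\bar \Gamma_s^+$; setting $\bar \Gamma_{s+1} = \bar \Gamma_s \cap \bar a_s^\perp$ preserves $\bar \Gamma^0 \subset \bar \Gamma_{s+1}$ and strictly decreases the lattice rank.

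The iteration therefore terminates at some $t \leq n+1$; at termination $\bar \Gamma_{t+1}^+ = \emptyset$ together with $\bar \Gamma^0 \subset \bar \Gamma_{t+1}$ gives $\bar \Gamma_{t+1} = \bar \Gamma^0$, proving (ii). For (i), given $\bar k \in \bar \Gamma^+$, let $s$ be the smallest index with $\bar k \notin \bar \Gamma_{s+1}$: then $\bar k \in \bar \Gamma_s$ with $\bar k \cdot \bar a_s \neq 0$, and a negative sign would place $\bar k \in -\bar \Gamma_s^+$, contradicting $\bar k \in \bar \Gamma^+$. Uniqueness follows inductively, since properties (i) and (ii) determine the chain $\bar \Gamma_s$ and then each $\bar a_s$ is the unique unit vector in $V_s \cap (\lspan \bar \Gamma_{s+1})^\perp$ with positive pairing on $\bar \Gamma_s^+$. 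The main obstacle I anticipate is the rational-approximation step inside the cone argument: verifying that the cone of nonnegative real solutions to $\sum \alpha_m \bar p_m \in \lspan \bar \Gamma^0$ is a rational polyhedral cone whose relative interior contains rational points, which hinges on the integrality of $\bar \Gamma^0$ and of its real span.
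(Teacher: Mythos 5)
The paper does not give its own proof of Proposition~\ref{graphbasic}; it merely cites \cite[(3.3)--(3.7)]{Ba89}, so there is no in-text argument to compare against. Your reconstruction is, in its essentials, sound: the base case via Moser's a priori estimate, the passage $\bar\Gamma^0=\DZ^{n+1}\cap\lspan\bar\Gamma^0$ from the torsion-free structure of the order, the inductive construction of $\bar a_s$ by separating the pointed cone generated by $\bar\Gamma_s^+$ modulo $\lspan\bar\Gamma^0$, the strict rank drop, and the verification of (i), (ii) all hold up, and the proof is in the spirit of Bangert's inductive description of the lattice order.

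Two points deserve comment. First, the rational-approximation step you flag as the main obstacle is indeed fine: the set $\{(\alpha_m)\,:\,\alpha_m\ge 0,\ \sum\alpha_m\bar p_m\in\lspan\bar\Gamma^0\}$ is a rational polyhedral cone (the $\bar p_m$ are integer vectors and $\lspan\bar\Gamma^0$ is a rational subspace, being spanned by a sublattice of $\DZ^{n+1}$), and you already possess a point in it with all $\alpha_m>0$; since rational points are dense in a rational polyhedral cone and the set $\{\alpha_m>0\ \forall m\}$ is relatively open, a rational such point and then an integer one exists after clearing denominators. Likewise, the ``Hahn--Banach'' step is the finite-dimensional proper separation theorem: $K\setminus\{0\}$ and $-K\setminus\{0\}$ are disjoint nonempty convex sets with disjoint relative interiors (by pointedness), so proper separation gives $\ell\ne 0$ with $\ell\ge 0$ on $K$, and $\ell$ is not identically zero on $K$ because $K-K=V_s/\lspan\bar\Gamma^0$.

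Second, your closing uniqueness criterion is imprecisely stated. You describe $\bar a_s$ as ``the unique unit vector in $V_s\cap(\lspan\bar\Gamma_{s+1})^\perp$ with positive pairing on $\bar\Gamma_s^+$''; but $\bar a_s$ has only \emph{nonnegative} pairing on $\bar\Gamma_s^+$ (it vanishes on $\bar\Gamma_{s+1}\cap\bar\Gamma^+$), and the space $V_s\cap(\lspan\bar\Gamma_{s+1})^\perp$ can have dimension greater than one when $\bar a_s$ is irrational relative to $V_s$, since then $\lspan\bar\Gamma_{s+1}\subsetneq V_s\cap\bar a_s^\perp$. The correct uniqueness argument is the one you sketch implicitly for the base case: any $\bar a_s'$ satisfying (i) obeys $\bar k\cdot\bar a_s>0\Rightarrow T_{\bar k}u>u\Rightarrow\bar k\cdot\bar a_s'\ge 0$ for $\bar k\in\bar\Gamma_s$, and since $\bar\Gamma_s$ generates $V_s$ this forces $\bar a_s'^\perp\cap V_s=\bar a_s^\perp\cap V_s$, hence $\bar a_s'=\pm\bar a_s$, with the sign pinned down by the nontrivial nonnegativity on $\bar\Gamma_s^+$. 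With this repair the argument is complete.
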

  Moser proved in \cite{Mos} that, if $|\bar a_1| =1$ and $\bar a_1 \cdot \bar
  e_{n+1} > 0$, there exist functions $u \in \mcm$ with $\bar a_1(u) = \bar
  a_1$. A system of unit vectors $(\bar a_1,\ldots, \bar a_t)$ is called {\em
    admissible} if $\bar a_1 \cdot \bar e_{n+1} >0$ and relation \eqref{adm} is
  satisfied. For an admissible system $(\bar a_1,\ldots,\bar a_t)$ we write
  $$\mcm(\bar a_1,\ldots,\bar a_t) = \big\{u \in \mcm \mid t(u) = t\, \mbox{and
  } \bar a_s(u) = \bar a_s\;\mbox{for } 1\le s \le t\big \}\,.$$

  The following observation describes the action of subgroups of $\bar \Gamma$
  on solutions.
  \begin{prop}\label{4.2}
    If $u \in \mcm(\bar a_1,\ldots, \bar a_t),\, t > 1$, then there exist
    functions $u^-$ and $u^+$ in $\mcm(\bar a_1, \ldots, a_{t-1})$ with the
    following properties:
    \begin{enumerate}[{\em (a)}]
    \item If $\bar k_i \in \bar \Gamma_t$ and $\lim_{i \to \infty} \bar k_i
      \cdot \bar a_t = \pm \infty$ then $\lim_{i \to \infty} T_{\bar k_i} u =
      u^\pm$\,.
    \item $u^-<u<u^+$ and $T_{\bar k} u^- \ge u^+$ if $k \in \bar \Gamma_s$ and
      $\bar k \cdot \bar a_s > 0$ for some $1\le s <t$.
    \end{enumerate}
  \end{prop}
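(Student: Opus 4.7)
My plan, following Morse's strategy, is to construct $u^+$ as a monotone $C^2_\loc$-limit of translates of $u$ under $\bar\Gamma_t$ and to identify its type via the uniqueness clause of Proposition~\ref{graphbasic}; the construction of $u^-$ is symmetric, so I focus on $u^+$.

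Fix a sequence $\bar k_i\in\bar\Gamma_t$ with $\bar k_i\cdot\bar a_t\to+\infty$. Since $\bar k_i\cdot\bar a_1=0$, the graphs of $T_{\bar k_i}u$ lie in a common bounded Hausdorff neighborhood of the hyperplane normal to $\bar a_1$, so the values are locally uniformly bounded. Moser's Lipschitz estimate \cite[Theorem 3.1]{Mos} depends only on $\bar a_1$ and therefore applies uniformly, and the interior $C^{2,\epsilon}_\loc$-estimates for minimizers yield $C^2_\loc$-precompactness. Passing to a subsequence with $\bar k_i\cdot\bar a_t$ strictly increasing, the differences $\bar k_{i+1}-\bar k_i$ lie in $\bar\Gamma_t$ with positive $\bar a_t$-component, so Proposition~\ref{graphbasic}(i) gives $T_{\bar k_{i+1}}u>T_{\bar k_i}u$; monotonicity plus compactness then produces a $C^2_\loc$-limit $u^+$. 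An interlacing of two such sequences, ordered by $\bar a_t$-component, shows that $u^+$ is independent of the chosen sequence, proving (a).

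The limit $u^+$ is again a minimizer (minimality passes to $C^1_\loc$-limits). For any $\bar l\in\DZ^{n+1}$, the trichotomy $T_{\bar l}u\,\{>,=,<\}\,u$ passes to the weak trichotomy for $u^+$, and the strong maximum principle — applied to the two minimizers $u^+$ and $T_{\bar l}u^+$ of the same elliptic Euler--Lagrange equation — promotes each weak inequality to strict unless equality holds; hence $u^+\in\mcm$. Clearly $\bar a_1(u^+)=\bar a_1$ since the graph of $u^+$ lies in the same neighborhood of the hyperplane normal to $\bar a_1$. For $\bar l\in\bar\Gamma_t$, continuity of translation combined with the uniqueness in (a) gives $T_{\bar l}u^+=\lim_i T_{\bar l+\bar k_i}u=u^+$. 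For $\bar l\in\bar\Gamma_s$ with $\bar l\cdot\bar a_s>0$ and $s<t$, the vectors $\bar l+\bar k_i$ still have $\bar a_s$-component $\bar l\cdot\bar a_s>0$, so Proposition~\ref{graphbasic}(i) gives $T_{\bar l+\bar k_i}u>T_{\bar k_i}u$ and, in the limit, $T_{\bar l}u^+\ge u^+$. Upgrading this to strict inequality — the main technical step — calls for Harnack's inequality applied to the positive difference $T_{\bar l+\bar k_i}u-T_{\bar k_i}u$, which satisfies a linear elliptic equation with coefficients uniformly bounded via Moser's Lipschitz estimate; this produces a positive lower bound on any fixed ball uniform in $i$, and passage to the limit yields $T_{\bar l}u^+>u^+$. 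The uniqueness clause of Proposition~\ref{graphbasic} then pins down $t(u^+)=t-1$ and $\bar a_s(u^+)=\bar a_s$ for $s<t$.

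For (b), choosing $\bar k_0=0$ in the monotone sequence of (a) gives $u\le u^+$; equality is excluded because $u$ and $u^+$ have different stabilizers under $T$ (any $\bar k\in\bar\Gamma_t$ with $\bar k\cdot\bar a_t>0$ fixes $u^+$ but satisfies $T_{\bar k}u>u$), so the maximum principle upgrades to $u<u^+$, and symmetrically $u^-<u$. For the separation $T_{\bar k}u^-\ge u^+$, pick sequences $\bar k_i,\bar l_j\in\bar\Gamma_t$ with $\bar k_i\cdot\bar a_t\to-\infty$ and $\bar l_j\cdot\bar a_t\to+\infty$. Since $\bar k_i$ and $\bar l_j$ are orthogonal to $\bar a_1,\ldots,\bar a_{t-1}$, the vector $\bar k+\bar k_i-\bar l_j$ lies in $\bar\Gamma_s$ with $\bar a_s$-component $\bar k\cdot\bar a_s>0$, and Proposition~\ref{graphbasic}(i) applied to $u$ gives $T_{\bar k+\bar k_i}u>T_{\bar l_j}u$; letting $i\to\infty$ and then $j\to\infty$ yields $T_{\bar k}u^-\ge u^+$.
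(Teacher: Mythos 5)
The paper does not prove Proposition~\ref{4.2}; it simply cites Bangert's \cite[Proposition (4.2)]{Ba89}. Your proposal reconstructs what is essentially Bangert's argument (monotone $\bar\Gamma_t$-translates, compactness via Moser's Lipschitz bound, strong maximum principle, identification of the type via the uniqueness clause of Proposition~\ref{graphbasic}), and most of it is sound: the interlacing argument for well-posedness of the limit in (a), the verification that $u^+\in\mcm$, the identity $T_{\bar l}u^+=u^+$ for $\bar l\in\bar\Gamma_t$, the strictness $u^-<u<u^+$, and the derivation of the separation inequality $T_{\bar k}u^-\ge u^+$ are all correct and correctly sequenced.

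The one genuine gap is the Harnack step used to upgrade $T_{\bar l}u^+\ge u^+$ to a strict inequality for $\bar l\in\bar\Gamma_s$, $\bar l\cdot\bar a_s>0$, $s<t$. Harnack's inequality for the positive differences $d_i=T_{\bar l+\bar k_i}u-T_{\bar k_i}u$ gives $\sup_{B}d_i\le C\inf_B d_i$ with a uniform constant, but this is only a \emph{comparison} of sup and inf; it does not produce a positive lower bound uniform in $i$, which is what you assert. If $\inf_B d_i\to 0$ then Harnack forces $\sup_B d_i\to 0$ as well, so nothing prevents $d_i\to 0$ locally, i.e.\ $T_{\bar l}u^+\equiv u^+$, without some additional input. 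In fact this is exactly the kind of degeneration that can occur for a heteroclinic direction, so the claim needs an argument and not just Harnack. The fix is already in your own proposal, just in the wrong order: prove the separation $T_{\bar l}u^-\ge u^+$ first (your argument for it uses only (a) and Proposition~\ref{graphbasic}(i), not the type of $u^+$), establish $u^-<u<u^+$, and then conclude $T_{\bar l}u^+>T_{\bar l}u^-\ge u^+$. With that reordering the Harnack invocation can be dropped. A minor further remark: to invoke the uniqueness clause of Proposition~\ref{graphbasic} you need the full biconditionals in (i) and (ii), not just the implications you state; the converses do follow from the trichotomy for $u$ together with what you have shown, but they should be spelled out.
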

  \begin{proof}
    \cite[Proposition (4.2)]{Ba89}.
  \end{proof}
  
  Besides the fact that Theorem \ref{graphunique} below is of independent
  interest as uniqueness theorem, it is a central point in the proof of the
  following uniqueness and existence results, cf.~\cite[Sections 6 and 7]{Ba89}:

  If $(\bar a_1, \ldots, \bar a_t)$ is admissible, then $\mcm(\bar a_1, \ldots,
  \bar a_t)$ and even the (disjoint) union $\mcm(\bar a_1)\cup \mcm(\bar a_1,
  \bar a_2) \cup\ldots \cup \mcm(\bar a_1, \ldots, \bar a_t)$ are totally
  ordered. If $u_1, u_2 \in \mcm(\bar a_1, \ldots, \bar a_{t-1})$ satisfy
  $u_1<u_2$ and are neighbouring, i.e.~there exists no $u \in \mcm(\bar a_1,
  \ldots, \bar a_{t-1})$ with $u_1 < u < u_2$, then there exists $v \in
  \mcm(\bar a_1, \ldots, \bar a_t)$ with $u_1<v<u_2$.

  \section{The Uniqueness Theorem}
  \begin{theorem}\label{graphunique}
    Suppose $u \in \mcm(\bar a_1,\ldots, \bar a_t)$ and $t>1$. Then there is no
    $v \in \mcm(\bar a_1, \ldots, \bar a_{t-1})$ with $u^-<v<u^+$.
  \end{theorem}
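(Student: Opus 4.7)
The plan is to argue by contradiction: assume $v \in \mcm(\bar a_1, \ldots, \bar a_{t-1})$ satisfies $u^- < v < u^+$. By Proposition~\ref{graphbasic}(ii) applied to $v$ (which has $t(v) = t-1$), one has $T_{\bar k} v = v$ for every $\bar k \in \bar\Gamma_t$. Since $\bar\Gamma_t / \bar\Gamma_{t+1}$ is infinite cyclic, I pick a generator $\bar b \in \bar\Gamma_t$ with $\bar b \cdot \bar a_t > 0$ and consider the $\DZ$-orbit $u_m := T_{m \bar b} u$ for $m \in \DZ$. By Proposition~\ref{graphbasic}(i), $(u_m)_{m \in \DZ}$ is strictly increasing, and by Proposition~\ref{4.2}(a), $u_m \to u^\pm$ in $C^1_\loc$ as $m \to \pm\infty$.

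The crucial rigidity comes from the $\bar\Gamma_t$-invariance of $v$: writing $\bar b = (b, b')$ and applying $T_{\bar b}$ to $v$ and $u_m$ separately, one obtains
\begin{equation*}
(v - u_{m+1})(x) \;=\; T_{\bar b} v(x) - T_{\bar b} u_m(x) \;=\; (v - u_m)(x - b),
\end{equation*}
so $v - u_{m+1}$ is the pure $b$-translate of $v - u_m$ in the argument, and the two functions have identical global sign behaviour. Hence if $v \ge u_{m_0}$ holds everywhere for some $m_0 \in \DZ$, iterating the identity in both directions gives $v \ge u_m$ everywhere for every $m \in \DZ$; passing to the pointwise limit $m \to \infty$ yields $v \ge u^+$, contradicting the assumption $v < u^+$. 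An analogous contradiction rules out $v \le u_{m_0}$ everywhere for any $m_0$. Consequently, for every $m \in \DZ$, the function $v - u_m$ changes sign; i.e., $v$ and $u_m$ cross.

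The last step is to derive a contradiction from the crossing of the two minimizers $v$ and $u_m$ (for a fixed $m$, say $m = 0$). Following the Morse-style strategy announced in the introduction, I would find a bounded open set $\Omega$ with $\Omega \subset \{u < v\}$ (or $\{u > v\}$) and $u = v$ on $\partial\Omega$, and then cut-and-paste: define $\tilde u$ to agree with $v$ on $\Omega$ and with $u$ outside, together with the symmetric $\tilde v$. Both are compactly supported variations of $u$ and $v$; since on $\Omega$ the pair $(u, v)$ is merely swapped, one has the pointwise identity $F(\cdot, \tilde u, \tilde u_x) + F(\cdot, \tilde v, \tilde v_x) = F(\cdot, u, u_x) + F(\cdot, v, v_x)$. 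Combined with the two individual minimality estimates, both inequalities must hold with equality, so $\tilde u$ is another minimizer on $\Omega$ with the same boundary values as $u$; strict convexity of $F$ in $p$ (uniqueness of the Dirichlet minimizer) then forces $\tilde u = u$, contradicting $\tilde u = v \ne u$ on $\Omega$.

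The main obstacle, and presumably the point at which \cite{Ba89} requires completion, is the construction of such a bounded region $\Omega$. The crossing set $\{u \ne v\}$ is only $\bar\Gamma_{t+1}$-invariant, and for $t > 1$ this lattice projects to $\DR^n$ with rank at most $n+1-t < n$, so the corresponding quotient is non-compact and a priori every connected component of the crossing set could be unbounded. The Morse-inspired completion I have in mind combines the translation identity of step two, which relates the crossings of the different $u_m$ with $v$, with the asymptotic behaviour $u_m \to u^\pm$ to show that far in the $\pm\bar a_t$-direction the crossings of $u$ and $v$ must disappear, thereby localizing the crossing set and producing the bounded component $\Omega$.
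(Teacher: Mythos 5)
Your first two steps are sound: the $\bar\Gamma_t$-invariance of $v$ (by Proposition~\ref{graphbasic}(ii) with $t(v)=t-1$), the translation identity $(v-u_{m+1})(x)=(v-u_m)(x-b)$, and the conclusion that $v$ crosses every $u_m$. One small caveat: $\bar\Gamma_t/\bar\Gamma_{t+1}$ need not be infinite cyclic, since $\bar a_t$ can be irrational relative to $\bar\Gamma_t$ and the quotient then injects into $\DR$ with dense image; but your argument only uses the existence of some $\bar b\in\bar\Gamma_t$ with $\bar b\cdot\bar a_t>0$, so nothing is lost. Note also that this crossing statement is essentially the reason $w=\max\big(u,\min(v,T_{\bar k_0}u)\big)$ is a genuine hybrid and hence (by the maximum principle) not a minimizer; so you are on the same track as the paper up to this point.

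The gap is exactly where you flag it, and your sketched repair does not close it. Your plan needs a \emph{bounded} open set $\Omega\subset\{u<v\}$ with $u=v$ on $\partial\Omega$, so as to swap $u$ and $v$ across $\Omega$ and contradict regularity (or uniqueness) of minimizers. But for $n\ge 2$ nothing forces $\{u<v\}$ to have a bounded connected component; the paper remarks precisely that ``we can, in general, say nothing about how the hypersurfaces under consideration do intersect.'' Your proposed localization combines the translation identity with $u_m\to u^\pm$, but those facts govern how the family $\{u_m<v\}$ varies with $m$ and how it degenerates as $m\to\pm\infty$; they say nothing about the size or topology of a single crossing set $\{u_0<v\}$. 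Moreover, the asymptotics $u_m\to u^\pm$ refer to the $\bar a_t$-direction of the lattice action on $\DR^{n+1}$, not to a spatial direction in $\DR^n$, so they cannot make crossings ``disappear far out'' in the domain. For these reasons a bounded exchange region is not available, and the strict-convexity/Dirichlet-uniqueness endgame you describe never gets off the ground.

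The paper's proof is designed to avoid exactly this obstruction. Rather than swapping on a bounded set, it works with the globally defined $w=\max\big(u,\min(v,T_{\bar k_0}u)\big)$ and proceeds quantitatively: Bangert's Lemma~\ref{6.8} gives a definite energy gain $\Delta(w,\psi,B(0,r))<-\delta\, r^{j-1}$ with $j=\rank\bar\Gamma_t$, and the new ingredient (Lemma~\ref{glss}) constructs a compactly supported $\phi$ with $u+\phi=w$ on a large ball such that $\Delta(u,\phi,K)<\tfrac\delta2 s^{j-1}$. That construction is carried out on cylinders $Z(s,t)$ and relies on three estimates: short connections between $C^1_\loc$-close Lipschitz graphs with energy cost bounded by a boundary slice integral (Lemma~\ref{l69}); a slicing lemma to pick radii where that boundary integral is small (Lemma~\ref{volslice}); and a non-parametric Morse ``equal energy over fundamental domains'' statement relating $I(v,\cdot)$ and $I(u^\pm,\cdot)$ (Lemma~\ref{vu}). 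Adding $\phi$ and $\psi$ then contradicts minimality of $u$. In short, the paper replaces the exact swap on a bounded digon by an approximate swap on a growing region with controlled error terms, and it is precisely this approximate comparison machinery that your plan is missing.
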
 

  For economical reasons it makes sense to use the following abbreviations for
  functions $u \in W^{1,2}_\loc(\DR^n)$ and $\phi \in W^{1,2}_{0}(\DR^n)$ and
  measurable sets $A \subset \DR^n$ (cf.~\cite{Mos} and \cite{Ba89}):

  \begin{align*} 
    I(u,A) &\mathrel{\mathop:}= \int_A F(x,u,u_x)\,dx \quad \mbox{ if this integral exists in }
    \DR \cup
    \{\pm \infty\}\,,\\
    \Delta (u,\phi,A) &\mathrel{\mathop:}= \int_A \big(F(x,u+\phi,u_x + \phi_x) -
    F(x,u,u_x)\big)\,dx \,.
  \end{align*}
  In order to prove the Theorem we will imitate Morse's proof of \cite[Theorem
  13]{Mo}. This is not straightforward because of several reasons:

  The proof is based on comparison arguments for which we need to find ``short''
  connections between solutions which are close (in $C^1_\loc$).  In the
  parametric case ``slicing'' from Geometric Measure Theory provides such short
  connections. In the non-parametric case we need connecting graphs, for which
  we can control the slope, because our variational problem punishes steepness.
  We extend the idea of \cite[Lemma (6.8)]{Ba87} of constructing such
  connections.

  In higher dimensions, we have to cope with two additional difficulties:
  Solutions could show different behaviour in different directions in view of
  Proposition \ref{graphbasic}: A solution $u$ might be {\em recurrent} in some
  directions, {\em periodic} in some directions and {\em heteroclinic} in some
  directions (cf.~\cite{Ba87} and \cite{Ba89}). Furthermore we can, in general,
  say nothing about how the hypersurfaces under consideration do intersect.
  
  \subsection*{Proof of Theorem \ref{graphunique} for $n=1$}

  In case $n=1$ we carry over Morse's technique to the non-parametric case. The
  proof in this case also serves as a guideline for the proof in case $n \ge 2$.

  Suppose there exists a function $v \in \mcm(\bar a_1)$ with $u^- < v < u^+$.
  Following \cite[proof of Theorem (6.6)]{Ba89}, we choose the generator $\bar
  k_0 = (k_0,k_0')$ of $\bar \Gamma_2 = \bar \Gamma_2(u)$ with $\bar k_0 \cdot
  \bar a_2(u) > 0$ and define
  $$ w = \max\big(u, \min(v, T_{\bar k_0}u)\big)\,,$$
  cf.~figure \ref{morsefig} on page \pageref{morsefig}. Clearly $k_0 \neq 0$.
  Without loss of generality we {\bf assume} that $k_0 < 0$.
  \begin{remark}
    Why the proofs for $n=1$ and $n\ge2$ are different: The function $w$ (also
    in the higher dimensional case) is defined using $T_{\bar k_0}u$ and $k_0$
    determines a one dimensional subspace $\DR k_0\subset \DR^n$.  We have to
    compare the energies of the functions $u$ and $w$ on domains that feature
    some periodicity in this direction. In case $n=1$ we can use intervals, but
    in case $n\ge 2$ round balls are not suitable and, in view of Lemma
    \ref{l69}, cuboids are also not suitable. We use cylinders with caps (the
    sets $Z(r,t)$ below).  Also the fact that $\DR k_0 \subsetneq \DR^n$ for $n
    \ge 2$ makes a finer investigation necessary, cf.~\eqref{eq:ac}.
  \end{remark}

  The Maximum Principle, cf.~e.g.~\cite[Lemma 4.2]{Mos}, implies that $w$ is not
  minimizing. So we can save energy by a compactly supported variation.  This
  observation is contained in the following lemma, which is a special case of
  Lemma \ref{6.8} and proven in \cite[(6.8)]{Ba89}:
  \begin{lemma}\label{6.8.1} 
    There exist $\delta > 0$ and $r_0> 0$ and a function $\psi \in
    W^{1,2}_0(\DR)$ with $\spt \psi \subset (-r_0,r_0)$ such that
    $$\Delta\big(w,\psi,(-r_0,r_0)\big) < - \delta \,.$$  
  \end{lemma}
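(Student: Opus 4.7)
The plan is to show $w$ is not a minimizer via the strong maximum principle, and then to extract a compactly supported variation of definite negative energy.

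First, I would record the structure of $w = \max(u, \min(v, T_{\bar k_0} u))$. Since $\bar k_0 \cdot \bar a_2(u) > 0$ gives $u < T_{\bar k_0} u$ pointwise, the function $w$ coincides pointwise with one of the three minimizers $u$, $v$, $T_{\bar k_0} u$, and it equals $u$ precisely on $\{u \geq v\}$. The working hypothesis $u^- < v < u^+$ together with the $C^1_{\loc}$-convergence $T_{i \bar k_0}u \to u^\pm$ as $i \to \pm\infty$ from Proposition \ref{4.2}(a) ensures that $v$ cannot be globally ordered against the $\bar \Gamma_2$-orbit of $u$; in particular $v$ crosses $u$, so both $\{u > v\}$ and $\{u < v\}$ are non-empty open subsets of $\DR$.

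Second, I would appeal to the strong maximum principle for the quasilinear Euler-Lagrange equation (in the form of \cite[Lemma 4.2]{Mos}) to rule out $w$ being a minimizer. If $w$ were a minimizer, elliptic regularity (using $F \in C^{2,\epsilon}$ and Moser's growth conditions) would upgrade $w$ to a $C^{2,\epsilon}$ classical solution of the Euler-Lagrange equation. Since $w = u$ on the non-empty open set $\{u > v\}$, the difference $w - u \geq 0$ would then satisfy a linear elliptic ODE with continuous coefficients (from the standard linearisation between the two $C^2$ solutions) and vanish on an open set; the strong maximum principle, equivalently 1D unique continuation, forces $w \equiv u$, contradicting $w > u$ on the non-empty open set $\{u < v\}$.

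Third, since $w$ is not a minimizer, there exist $r_0 > 0$ and $\psi \in W^{1,2}_0((-r_0, r_0))$ with $\Delta(w, \psi, (-r_0, r_0)) < 0$: otherwise $w$ would minimise against every compactly supported variation, which by definition makes it a weak minimiser. Setting $\delta := -\Delta(w, \psi, (-r_0, r_0)) > 0$ yields the lemma.

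The main obstacle I expect is the non-degeneracy claim in paragraph one, specifically ruling out global orderings such as $v \leq u$ (which would collapse $w$ to $u$ and defeat the lemma). This is where the strong maximum principle must be applied not just to $w$ versus $u$ but also to $v$ against the translates $T_{i \bar k_0} u$, combined with the asymptotic description $u^\pm = \lim T_{i \bar k_0} u$ from Proposition \ref{4.2}(a); the $C^1_{\loc}$-convergence combined with $v$ sitting strictly between $u^-$ and $u^+$ is what ultimately prevents any such one-sided ordering.
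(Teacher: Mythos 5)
Your proof is correct and follows the same route the paper indicates (maximum principle shows $w$ is not a minimizer, and any failure of minimality on a compact set yields the desired $\delta$, $r_0$, $\psi$; the paper defers the details to \cite[Lemma 4.2]{Mos} and \cite[(6.8)]{Ba89}). One small simplification worth noting: the crossing of $u$ and $v$, which you flag as the main obstacle, requires no maximum principle -- it follows directly from the order-preservation of $T_{\bar k}$, the $\bar\Gamma_2$-invariance of $v$, and the strict inequalities $u^-<v<u^+$: if, say, $u\le v$, then $T_{i\bar k_0}u\le T_{i\bar k_0}v=v$ for all $i$, so letting $i\to\infty$ gives $u^+\le v$, a contradiction, and the case $u\ge v$ is symmetric.
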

  
  What is missing in the proof of \cite[Theorem (6.6)]{Ba89} is the construction
  of a variation $u+\phi$ of $u$ (with $\spt \phi$ contained in a compact
  interval $K$), that coincides with $w$ on $(-r_0,r_0)$ such that
  $I(u+\phi,K)-I(u,K)$ is smaller than the gain $\delta$ provided by Lemma
  \ref{6.8.1}, say smaller than $\frac \delta2$:
  \begin{lemma}\label{glss.1}
    For $\delta>0$ and $r_0>0$ from Lemma \ref{6.8.1} there exist a compact set
    $K \supset (-r_0,r_0)$ and a function $\phi \in W^{1,2}_{0}(\DR^n)$ with
    $\spt \phi\subset K$ such that $(u+\phi)\big|_{(-r_0,r_0)} =
    w\big|_{(-r_0,r_0)}$ and
    \begin{equation} 
      \label{eq:gu1.1}  
      \Delta(u,\phi,K) < \frac \delta 2\,. 
    \end{equation} 
  \end{lemma}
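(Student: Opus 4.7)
The plan is to exploit the heteroclinic asymptotics of $u$ to taper $w-u$ on a one-sided tail, with no tapering needed on the other side. A first analysis of $w=\max(u,\min(v,T_{\bar k_0}u))$ using Proposition \ref{4.2}(a) shows: since $u(x)\to u^+(x)$ in $C^1_\loc$ as $x\to+\infty$ and $u^+-v\ge\sigma>0$ uniformly (the difference $u^+-v$ is $|k_0|$-periodic by $\bar k_0$-equivariance of $u^+$ and $v$, hence bounded below by its positive minimum on a period), we have $v<u$ and thus $w\equiv u$ on $(r_0,\infty)$ provided $r_0$ is chosen large enough. Symmetrically $u(x),T_{\bar k_0}u(x)\to u^-(x)$ as $x\to-\infty$ while $v>u^-$ uniformly, so $\min(v,T_{\bar k_0}u)=T_{\bar k_0}u$ and $w\equiv T_{\bar k_0}u$ on $(-\infty,-r_0]$ for $r_0$ large. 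Therefore $\phi$ only has to be tapered on the left, from $T_{\bar k_0}u-u$ down to $0$ on some interval $[-N',-N]$, and we set $\phi\equiv 0$ outside $K=[-N',r_0]$.

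For the tapering, I would take the linear interpolation $u+\phi=\eta(x)\,T_{\bar k_0}u+(1-\eta(x))\,u$ on $[-N',-N]$, with $\eta$ a Lipschitz cutoff equal to $1$ at $-N$ and $0$ at $-N'$. Both $u$ and $T_{\bar k_0}u$ converge in $C^1_\loc$ to the common $\bar k_0$-equivariant limit $u^-$ as $x\to-\infty$ (again by Proposition \ref{4.2}(a)), so for any prescribed $\epsilon>0$ and any fixed transition length $N'-N$, choosing $N$ large gives $\|T_{\bar k_0}u-u\|_{C^1([-N',-N])}<\epsilon$, hence $\|\phi\|_{C^1([-N',-N])}\le C\epsilon$. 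Combined with the $C^{2,\epsilon}$-regularity of $F$ and Moser's uniform Lipschitz bound on solutions, this gives $|F(x,u+\phi,u_x+\phi_x)-F(x,u,u_x)|\le C\epsilon$ pointwise on $[-N',-N]$, so the transition contribution to $\Delta(u,\phi,K)$ is bounded by $C\epsilon(N'-N)$ and can be made arbitrarily small.

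The main obstacle is the bulk contribution. Splitting $K=[-N',r_0]$ into the transition, the middle $[-r_0,r_0]$, and the outer $[-N,-r_0]$: the transition is $o(1)$ as above; the middle contributes a fixed constant $a:=I(w,[-r_0,r_0])-I(u,[-r_0,r_0])$ independent of $N$; on the outer piece $w=T_{\bar k_0}u$, so by $\DZ^{n+1}$-invariance of $F$ we have $I(w,[-N,-r_0])=I(u,[-N+|k_0|,-r_0+|k_0|])$, and subtracting $I(u,[-N,-r_0])$ produces a difference of integrals of $F(\cdot,u,u_x)$ over same-length intervals shifted by $|k_0|$. Since $u\to u^-$ in $C^1$ uniformly as $x\to-\infty$ and $u^-$ renders the integrand $F(\cdot,u^-,u^-_x)$ genuinely $|k_0|$-periodic, this telescopic difference is $o(1)$ as $N\to\infty$. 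Hence $\Delta(u,\phi,K)=a+o(1)$, and the decisive step is to arrange $a<\delta/2$. This is the subtle point left implicit in \cite{Ba89}; I expect to resolve it by revisiting Lemma \ref{6.8.1} and choosing the support of $\psi$ centered at a definite kink of $w$, where the Maximum Principle gain $\delta$ is bounded below intrinsically while the bulk excess $a$ is controlled by the Lipschitz bounds on $u$ and $w$ combined with a judicious choice of $r_0$ localizing around that kink.
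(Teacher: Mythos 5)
Your structural analysis of $w$ is correct (for $r_0$ suitably enlarged, $w\equiv u$ on the far right and $w\equiv T_{\bar k_0}u$ on the far left, so only a one-sided taper is needed), and the taper itself works. But there is a genuine error in the bulk estimate, and the essential ingredient of the proof is missing.

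The ``telescopic difference'' is \emph{not} $o(1)$. Writing it out,
\begin{align*}
 I\big(T_{\bar k_0}u,[-N,-r_0]\big)-I\big(u,[-N,-r_0]\big)
 &= I\big(u,[-N+|k_0|,-r_0+|k_0|]\big)-I\big(u,[-N,-r_0]\big)\\
 &= I\big(u,(-r_0,-r_0+|k_0|]\big)-I\big(u,[-N,-N+|k_0|)\big)\,,
\end{align*}
and as $N\to\infty$ the second term tends to $I(u^-,H_0)$ (one fundamental domain) while the first term is a fixed constant that has nothing to do with $u^-$, because $-r_0$ is \emph{not} in the asymptotic regime. So the shift argument only cancels contributions near the far end $-N$; near $-r_0$ it leaves behind the constant $I\big(u,(-r_0,-r_0+|k_0|]\big)-I(u^-,H_0)$. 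If instead you push $r_0$ far enough left that $u\approx u^-$ near $-r_0$, the leftover moves into the middle piece and you are reduced to comparing $I(v,H)$ with $I(u^-,H_0)$ for two fundamental domains $H,H_0$ of $\DZ k_0$.

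That comparison is the actual crux, and it cannot be resolved by ``Lipschitz bounds and a judicious choice of $r_0$'': what is needed is the non-parametric analogue of Morse's Theorem 12, namely that any two ordered periodic solutions in $\mcm(\bar a_1)$ have \emph{equal} energy over any fundamental domain (Lemma~\ref{vu.1} in the paper, proved by a minimality-plus-short-connection comparison over long intervals). This is precisely what makes the bulk excess vanish. Your proposal neither states nor proves this energy equality, and your final paragraph explicitly flags the gap rather than closing it. The paper's proof uses Lemma~\ref{vu.1} together with the fundamental-domain decomposition of $w\chi_{(-t,t)}$ (equations \eqref{wdecomp.1}--\eqref{eq:a6.1}) to get $I(w,(-t,t))<I(u,(-t,t))+\delta/4$; that mechanism is what you need to supply.
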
 
  The corresponding result for $n \ge 2$ is Lemma \ref{glss}.  Once this is
  established one easily gives the
  \begin{proof}[Proof of Theorem \ref{graphunique} for $n=1$, assuming Lemma
    \ref{glss.1}] 
    If there would exist such a function $v$, we could construct the function
    $w$, and the two lemmas above yield compactly supported functions $\psi$ and
    $\phi$ such that
    \begin{align*}
      \Delta\big(u,&\phi + \psi, K\big) = \Delta\big(u,\phi + \psi
      ,(-r_0,r_0)\big) + \Delta\big(u, \phi + \psi,
      K \setminus(-r_0,r_0)\big)\\
      &= \Delta\big(u + \phi, \psi, (-r_0,r_0)\big) +
      \Delta\big(u,\phi,(-r_0,r_0)\big) +
      \Delta\big(u,\phi,K\setminus(-r_0,r_0)\big) \\
      &=\Delta\big(w, \psi, (-r_0,r_0)\big) + \Delta\big(u,\phi,K\big) < -\delta
      + \frac \delta2 = -\frac\delta2 < 0\,,
    \end{align*}
    and this contradicts the minimality of $u$.
  \end{proof} 
  For the proof of Lemma \ref{glss.1} we shall need two results: The first of
  these, Lemma \ref{l69.1}, is a special case of \cite[Lemma (6.8) and Lemma
  (6.9)]{Ba87}, or Lemma \ref{l69} below.  If $\epsilon>0$ and $t>0$ are given,
  it allows us to construct the function $\phi$ such that
  $(u+\phi)\big|_{(-t,t)} = w\big|_{(-t,t)}$ and $\big| \Delta\big(u,\phi, \spt
  \phi \setminus (-t,t)\big)\big|< \epsilon$, i.e.~it is indeed what one would
  call a ``short connection''.  The second one is the non-parametric analogue of
  another result of Morse, cf.~\cite[Theorem 12]{Mo}, and asserts that the
  integral of a periodic solution over one period equals the energy of any other
  periodic solution with the same period over one period.
  \begin{lemma}\label{l69.1} 
    Consider $u_1,u_2:\DR \to \DR$ with Lipschitz constant $L$ and $t \in \DR^+$
    and suppose $0 \le u_2 - u_1 \le C$ for some $C >0$. Then there exists a
    function $g: \DR \to \DR$ such that
    \begin{enumerate}[{\em (a)}]
    \item $g$ is Lipschitz with constant $2L+1$,
    \item $g\big|_{[-t,t]} = u_2\big|_{[-t,t]}$,
    \item $g\big|_{\DR\setminus [-t-C,t+C]} = u_1\big|_{\DR \setminus
        [-t-C,t+C]}$,
    \item $ \mcl^1\bigg(\big\{x \in \DR \mid |x| \ge t,\, g(x) \neq u_1(x)\big\}\bigg)
      \le (u_2 - u_1)(-t)+(u_2 - u_1)(t)\,,$
    \item \label{e.1} there exists a constant $\tilde A = \tilde A(C,L,F)$
      such that
      \begin{align*}
        \bigg|\int_{\DR \setminus [-t,t]} \big(F(x,g,g_x) -
        F(&x,u_1,(u_1)_x)\big)\,dx
        \bigg| \\
        &\le \tilde A \bigg((u_2 - u_1)(-t)+(u_2 - u_1)(t)\bigg)\,.
      \end{align*}
    \end{enumerate}
  \end{lemma}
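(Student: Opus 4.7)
The plan is to construct $g$ explicitly by interpolating linearly between $u_2$ on $[-t,t]$ and $u_1$ outside a slightly larger interval. Concretely, set $g := u_2$ on $[-t,t]$, $g := u_1$ on $\DR \setminus [-t-C, t+C]$, and on the two transition intervals define
$$g(x) := \max\bigl(u_1(x),\, u_2(t) - (L+1)(x-t)\bigr) \quad\text{for } x \in [t, t+C],$$
and symmetrically
$$g(x) := \max\bigl(u_1(x),\, u_2(-t) + (L+1)(x+t)\bigr) \quad\text{for } x \in [-t-C, -t].$$
The key observation making this work is that the linear pieces of slope $\mp(L+1)$ are guaranteed to ``catch'' the graph of $u_1$ within the available length $C$: the vertical gap at $x = \pm t$ is at most $C$, while the descent closes the gap at rate at least $1$ relative to $u_1$.

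First I would check (b), (c) and continuity. Part (b) is immediate. At $x = \pm t$ the max equals $u_2(\pm t)$ because $u_2 \ge u_1$. At $x = t+C$ one combines the Lipschitz bound $u_1(t+C) \ge u_1(t) - LC$ with $u_2(t) - u_1(t) \le C$ to get $u_2(t) - (L+1)C \le u_1(t+C)$, whence $g(t+C) = u_1(t+C)$; the symmetric computation at $-t-C$ yields (c). For (a), each entry of the max is Lipschitz with constant at most $L+1$, so the max is too, and pasting with $u_2$ and $u_1$ (both of constant $L$) at the just-verified continuity points gives $g$ a global Lipschitz constant of $L+1 \le 2L+1$.

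For (d), a slope comparison on $[t, t+C]$ suffices: let $\tau \in [t, t+C]$ be the first point where $u_2(t) - (L+1)(\tau - t) = u_1(\tau)$. Combining with $u_1(\tau) \ge u_1(t) - L(\tau - t)$ yields
$$\tau - t \le u_2(t) - u_1(t) = (u_2 - u_1)(t),$$
and the analogous bound on $[-t-C, -t]$ completes (d).

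Finally (e) is a boundedness argument exploiting periodicity. Because $F$ is continuous and $\DZ^{n+1}$-periodic, it is bounded by some $M = M(F, L)$ on $\{(x, u, p) : |p| \le L+1\}$ modulo periods. Where $g = u_1$ the integrand $F(x, g, g_x) - F(x, u_1, (u_1)_x)$ vanishes almost everywhere (since $g_x = (u_1)_x$ a.e.\ on their coincidence set); on the complement its absolute value is at most $2M$, and (d) controls the measure of that complement. Hence (e) holds with $\tilde A := 2M$. I do not foresee any conceptual obstacle; the only genuine choice is the descent slope $L+1$, which is the smallest slope guaranteeing that the gap closes within the window of length $C$ irrespective of how $u_1$ oscillates. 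The stated constant $2L+1$ is correspondingly a comfortable overestimate.
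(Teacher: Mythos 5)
Your construction of $g$ is exactly the paper's: on each transition interval your $\max$ with the linear ramp of slope $\mp(L+1)$ coincides with the paper's $\max\{u_2(\operatorname{pr}(x))-(L+1)\,d(x,[-t,t]),\,u_1(x)\}$, and your verification of (e) via uniform boundedness of $F$ on Lipschitz graphs together with the measure estimate (d) is precisely the argument the paper gestures at. The only difference is that you spell out the routine checks the paper leaves to the reader, and you correctly observe that in one dimension the true Lipschitz constant is $L+1$, so the stated $2L+1$ is a comfortable overestimate.
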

  \begin{remark}\label{e2.1}
   Analogous statements are true if $0 \le u_1-u_2 \le C$.
  \end{remark}
  \begin{proof}
    Let $\operatorname{pr}: \DR \to [-t,t]$ be the nearest point projection and
    define
    $$ g(x) \mathrel{\mathop:}= \max\left\{u_2\big(\operatorname{pr}(x)\big) -
      (L+1)d\big(x,[-t,t]\big),u_1(x)\right\}\,.$$ One readily verifies that $g$
    satisfies (a)--(d). Since $F\big(x,h(x),h_x(x)\big)$ is uniformly bounded
    for all $x \in \DR$ and all $h \in \Lip(2L+1)$, also (e) follows.
  \end{proof}

  \begin{lemma}\label{vu.1}
    Consider the action $T'$ of $\DZ k_0$ on $\DR$, given by $T'_{k}
    x = x + k$ for every $k \in \DZ k_0$.  If $u_1, u_2 \in \mcm(\bar a_1)$ and
    $u_1\le u_2$ and $H_1, H_2$ are fundamental domains of $T'$, then
    $I(u_1,H_1) = I(u_2, H_2).$
  \end{lemma}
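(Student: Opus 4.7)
The plan is to reduce the statement to proving $I(u_1,H) = I(u_2,H)$ for one fundamental domain $H$ by exhibiting a hidden $k_0$-periodicity of the integrand, and then to sandwich these two numbers via two comparison arguments built on Lemma~\ref{l69.1}.

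First I would observe that since $u_i \in \mcm(\bar a_1)$ satisfies $t(u_i)=1$ and $\bar a_1(u_i)=\bar a_1$, Proposition~\ref{graphbasic}(ii) forces $T_{\bar k_0}u_i = u_i$, i.e.\ $u_i(x+k_0)=u_i(x)+k_0'$. Together with the $\DZ^2$-periodicity of $F$ this makes $x \mapsto F(x,u_i(x),(u_i)_x(x))$ genuinely $k_0$-periodic, so $I(u_i,H)$ is independent of the fundamental domain; it therefore remains to compare $I(u_1,H)$ with $I(u_2,H)$ for $H=[0,|k_0|]$. By Moser's a priori estimate both graphs lie within bounded Hausdorff distance of a hyperplane with normal $\bar a_1$, and combined with $u_1 \le u_2$ and the $k_0$-periodicity of $u_2-u_1$ this yields a uniform bound $0 \le u_2-u_1 \le C$.

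For the main comparison, fix a large integer $N$ and set $t_N := N|k_0|$, so that $[-t_N,t_N]$ is a union of $2N$ fundamental domains of $T'$. Apply Lemma~\ref{l69.1} to $u_1 \le u_2$ at scale $t=t_N$ to obtain a Lipschitz function $g_N$ with $g_N = u_2$ on $[-t_N,t_N]$, $g_N = u_1$ off $K_N := [-t_N-C,t_N+C]$, and transition error
\[
\bigg|\int_{\DR\setminus[-t_N,t_N]}\bigl(F(x,g_N,(g_N)_x)-F(x,u_1,(u_1)_x)\bigr)\,dx\bigg| \;\le\; 2\tilde A C.
\]
Minimality of $u_1$ against the compactly supported variation $g_N - u_1$ on $K_N$ gives $I(u_1,K_N) \le I(g_N,K_N)$. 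Splitting at $[-t_N,t_N]$, using $g_N = u_2$ there and the $k_0$-periodicity of both Lagrangians, we obtain $2N \cdot I(u_1,H) \le 2N \cdot I(u_2,H) + 2\tilde A C$; dividing by $2N$ and sending $N\to\infty$ yields $I(u_1,H) \le I(u_2,H)$. The opposite inequality is produced by the symmetric construction from Remark~\ref{e2.1}, applying the minimality of $u_2$ in place of $u_1$.

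The main obstacle is the comparison step: one needs the transition contribution to remain $O(1)$ uniformly in $N$, so that dividing by $2N$ actually gives something vanishing. This is exactly what Lemma~\ref{l69.1}(e) provides, but only because $u_2-u_1$ is a priori uniformly bounded (hence $\tilde A = \tilde A(C,L,F)$ is a fixed constant). If $u_2-u_1$ were unbounded the argument would collapse, which is why both the ordering hypothesis $u_1\le u_2$ and the $T_{\bar k_0}$-invariance coming from $t(u_i)=1$ are indispensable here.
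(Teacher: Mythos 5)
Your proof is correct and follows essentially the same route as the paper: reduce to a single fundamental domain $H_0$ using the $T_{\bar k_0}$-invariance of $u_1,u_2$ (via Proposition~\ref{graphbasic}(ii)) and the $\DZ^2$-periodicity of $F$, then compare the two on $[-t_N,t_N]$ by applying Lemma~\ref{l69.1} and minimality, noting the boundary cost stays $O(1)$ in $N$, and finally divide by $2N$; the reverse inequality comes from Remark~\ref{e2.1} with minimality of $u_2$. The only cosmetic difference is that the paper phrases the limiting step via a preselected $\epsilon$ and the choice $\frac{1}{n}\tilde A C < \epsilon$ rather than explicitly sending $N\to\infty$, but this is the same argument.
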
 

  \begin{proof}
    Let $\epsilon > 0$ be given.  By the assumed periodicity of $u_1$ and $u_2$
    and the $\DZ^2$-periodicity of $F$, we may assume without loss of generality
    that
    $$H_1 = H_2 = \{x \in \DR \mid 0 \le x <|k_0|\}=\mathrel{\mathop:}H_0\,.$$
    
    By periodicity of $u_1$ and $u_2$ there exists a constant $C >0$ such that
    $u_2- u_1 \le C$.  Let $n \in \DN$ be such that $\frac 1n \tilde A C <
    \epsilon$ and set $t= n |k_0|$. Let $g$ be the function provided by Lemma
    \ref{l69.1}.  For $\phi = g -u_1$ we have $u_1+\phi = u_2$ on $(-t,t)$ and,
    by minimality of $u_1$,
    $$ I\big(u_1, (-(t+C),t+C)\big) \le  I\big(u_1 + \phi,
    (-(t+C),t+C)\big)\,.$$ Using Remark \ref{e2.1} and Lemma \ref{l69.1}(e),
    we obtain
    $$ \big| I\big(u_1, (-t,t)\big) - I\big(u_2, (-t,t)\big)\big| \le  2\tilde A
    C\,.$$ Then, by the assumed periodicity of $u_1$ and $u_2$,
    \begin{align*} 
      2n|I(u_1,H_0) - I(u_2, H_0)| &= \big| I\big(u_1, -t,t)\big) - I\big(u_2,
      (-t,t)\big)\big| \le 2\tilde A C\,,
    \end{align*}
    and thus $|I(u_1,H_0) - I(u_2, H_0)|<\epsilon$.
  \end{proof}

  \begin{proof}[Proof of Lemma \ref{glss.1}]
    According to Proposition \ref{4.2}(a) it is true that $T_{n\bar k_0}u \to
    u^\pm$ in $C^1_\loc$ as $n \to \pm \infty$. Thence 
    $$\big(w - u\big)(-t) + \big(w-u\big)(t) \to 0 \quad \mbox{ as } t \to
    \infty\,.$$

    Let $g_t$ be the functions provided by Lemma \ref{l69.1} for $u_1 = u$, $u_2
    =w$ and $t>r_0$ for $r_0$ from Lemma \ref{6.8.1}.  Set $\phi_t
    \mathrel{\mathop:}= g_t - u$ and $K_t = \spt \phi_t$.  Then, by Lemma
    \ref{l69.1} (e), we may choose $t_0$ so large that for $t \ge t_0$
    \begin{equation}
      \label{eq:a2.1}
      \big|\Delta\big(u,\phi_t, K_t \setminus (-t,t)\big)\big|<\frac \delta4\,.
    \end{equation}
    This estimates the ``cost of energy by short connections'' outside $(-t,t)$.
   \begin{figure}
      \includegraphics[scale=1]{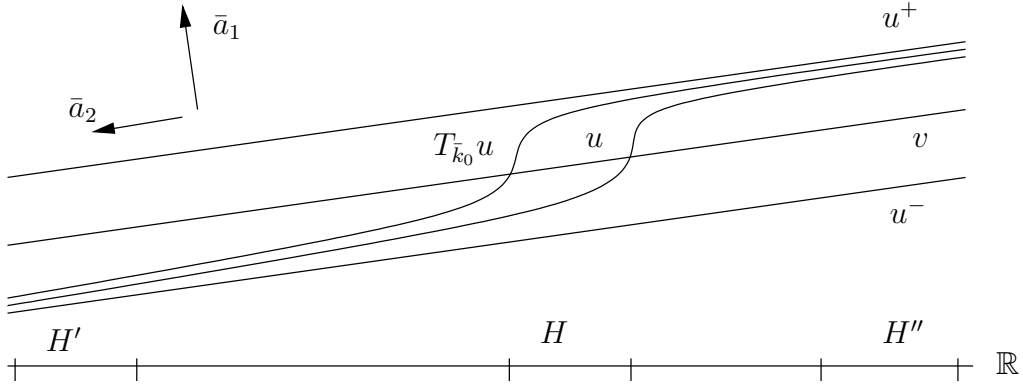}
      \caption{Idea for the proof of Theorem \ref{graphunique}}
      \label{morsefig}
    \end{figure}

    Now we have to compare the energy of $u$ and $w$ inside $(-t,t)$. We will
    have to consider the following fundamental domains of $T'$ (recall that we
    assume $k_0 <0$, and cf.~Figure \ref{morsefig}):
    \begin{align*} 
      H_t' &\mathrel{\mathop:}=(-t,-t-k_0]\\
      H_t'' &\mathrel{\mathop:}=[t +k_0,t)\\
      H &\mathrel{\mathop:}= \{x\in \DR\mid u(x)<v(x)\le T_{\bar k_0}u(x)\}\,.
    \end{align*}
    By continuity of $F$, the $C^1_\loc$-convergence provided by Proposition
    \ref{4.2}(a) implies $\big|I(u,H_t') - I(u^-,H_t')\big| \to 0 $ as $t \to
    \infty$. Hence by Lemma \ref{vu.1} we may choose $t'\ge t_0$ so large that
    for every $t \ge t'$ we have
    \begin{equation}
      \label{eq:a3.1}
      \big|I(v,H) - I(u,H_t')\big| < \frac \delta4\,.
    \end{equation}
    By periodicity of $v$ and $u^-$ and by the above-mentioned $C^1_\loc$-, and
    hence $C^0_\loc$-convergence, there exists $t'' \ge t'$ such that
    \begin{equation}\label{eq:conv.1} H_t'\cap \{u \ge v\} = \emptyset\,,\;
      H_t'' \cap \{T_{\bar k_0}u < v\} = \emptyset\;\mbox{ and } \; H \cap
      (-t,t) = H
    \end{equation}
    for all $t \ge t''$. Consequently, for $t\ge t''$, there is the
    decomposition
    \begin{align}
      \begin{split}\label{wdecomp.1}
        w\cdot\chi_{(-t,t)} &= v\cdot\chi_H + u\cdot\chi_{(-t,t)\cap \{u \ge v\}} + T_{\bar
          k_0}
        u\cdot\chi_{(-t,t) \cap \{T_{\bar k_0}u<v\}}\\
        &= v\cdot\chi_{H} + u\cdot\chi_{H_t'\cap \{u \ge v\}} +
        u\cdot\chi_{((-t,t)\setminus H_t')\cap \{u \ge v\}} \\
        &\quad + T_{\bar k_0} u\cdot\chi_{H_t''\cap \{T_{\bar k_0}u < v\}} + T_{\bar
          k_0} u\cdot\chi_{((-t,t) \setminus H_t'')\cap \{T_{\bar k_0}u<v\}}\,.
      \end{split}
    \end{align}
    Furthermore periodicity of $F$ and $v$ yields
    \begin{equation}\label{eq:a6.1}
      I\big(T_{\bar k_0} u,  ((-t,t) \setminus H_t'')\cap \{T_{\bar
        k_0}u<v\}\big) = I\big(u,((-t,t) \setminus H_t') \cap \{u<v\}
      \big)\,.
    \end{equation}
    From the decomposition \eqref{wdecomp.1} for $w$ we deduce for $t \ge t''$,
    using \eqref{eq:a3.1}, \eqref{eq:conv.1} and \eqref{eq:a6.1}:
    \begin{align*}
      I\big(w,(-t,t)\big) &< I\big(u, H_t'\big) + \frac \delta4+ 0 + I\big(u,
      ((-t,t)\setminus H_t') \cap \{u \ge v\}\big)\\
      &\quad +0+ I\big(u,((-t,t)\setminus H_t') \cap \{u<v\}\big)\\
      &=I\big(u,(-t,t)\big) + \frac \delta4\,.
    \end{align*}
    Together with \eqref{eq:a2.1} this gives $\Delta(u, \phi_t, K_t) < \frac
    \delta2\,.$
  \end{proof}

  \subsection*{Proof of Theorem \ref{graphunique} for $n\ge 2$}

  We assume the existence of $v \in \mcm(\bar a_1,\ldots, \bar a_{t-1})$ with
  $u^-<v<u^+$. As in the one-dimensional case we follow \cite{Ba89} and define
  the function $w$ as follows: Choose $\bar k_0=\big(k_0, (k_0)'\big) \in \bar
  \Gamma_t$ with $\bar k_0 \cdot \bar a_t > 0$, and set
  $$w = \max \big(u,\min (v,T_{\bar k_0} u)\big)\,.$$
  Let us write $j = \rank \bar \Gamma_t$. By \cite[(6.8)]{Ba89} we have the
  following
  \begin{lemma}\label{6.8}
    There exist $\delta > 0$ and $r_0>0$ such that for every $r>r_0$ there
    exists a function $\psi= \psi_r \in W^{1,2}_0(\DR^n)$ with $\spt \psi
    \subset B(0,r))$ such that

    $$\Delta\big(w,\psi,B(0,r)\big) < - \delta r^{j-1}\,.$$  
  \end{lemma}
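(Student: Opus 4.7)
The plan is to produce a single compactly supported variation of $w$ saving a definite amount of energy and then use the $\bar\Gamma_{t+1}$-symmetry of $w$ to sum $\sim r^{j-1}$ disjoint translates of it inside $B(0,r)$.

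First I would establish that $w$ is invariant under $T_{\bar k}$ for every $\bar k \in \bar\Gamma_{t+1}$. Since $\bar\Gamma_{t+1} \subset \bar\Gamma_t$ and $v \in \mcm(\bar a_1,\ldots,\bar a_{t-1})$, Proposition \ref{graphbasic}(ii) yields $T_{\bar k} v = v$. Likewise $T_{\bar k} u = u$ directly, and by commutativity of the action $T_{\bar k}(T_{\bar k_0} u) = T_{\bar k_0}(T_{\bar k} u) = T_{\bar k_0} u$. The projection $(k,k') \mapsto k$ sends $\bar\Gamma_{t+1}$ injectively into $\DR^n$, because if $(0,k') \in \bar\Gamma_{t+1}$ then $0 = (0,k') \cdot \bar a_1 = k'(\bar a_1 \cdot \bar e_{n+1})$ forces $k' = 0$. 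Call the image $\Lambda \subset \DR^n$; it is a lattice of rank $j-1$.

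Second, I would construct the local variation. Under the assumption $u^- < v < u^+$ together with $T_{m\bar k_0}u \to u^\pm$ as $m \to \pm\infty$ (Proposition \ref{4.2}(a)), the function $v$ must cross at least one of $u$ and $T_{\bar k_0}u$; otherwise $w$ would identically equal one of the three minimizers, contradicting the strict bounds $u < T_{\bar k_0}u$ and $u^- < v < u^+$. At a crossing point $x_0$ of, say, $v$ and $u$, the strong Maximum Principle \cite[Lemma 4.2]{Mos} forbids tangential touching of two distinct solutions, so $w = \max(u,\min(v,T_{\bar k_0}u))$ has a genuine Lipschitz kink there (discontinuous normal gradient). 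Rounding off this kink and using the strict convexity of $F$ in the gradient variable produces $\rho > 0$, $\eta > 0$, and $\psi_0 \in W^{1,2}_0(B(x_0,\rho))$ with $\Delta(w,\psi_0,B(x_0,\rho)) < -\eta$.

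Third, I would amplify by summing translates. For $\bar k = (k,k') \in \bar\Gamma_{t+1}$, set $\psi_0^{\bar k}(x) := \psi_0(x-k)$. A change of variable combined with the $\DZ^{n+1}$-periodicity of $F$ and the $\bar\Gamma_{t+1}$-invariance of $w$ established above gives $\Delta(w,\psi_0^{\bar k}, B(x_0+k,\rho)) = \Delta(w,\psi_0,B(x_0,\rho)) < -\eta$. Choose $r_0$ so large that for every $r > r_0$ one can select $N(r) \ge c\, r^{j-1}$ lattice points $k_1,\ldots,k_N \in \Lambda$ with the balls $B(x_0+k_i,\rho)$ pairwise disjoint and contained in $B(0,r)$; this is possible precisely because $\Lambda$ has rank $j-1$. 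Setting $\psi_r := \sum_{i=1}^N \psi_0^{\bar k_i}$, the disjoint supports give $\Delta(w,\psi_r,B(0,r)) < -N(r)\eta \le -\delta r^{j-1}$ with $\delta := c\eta$.

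The main obstacle is the second step: going from qualitative non-minimality of $w$ to a quantitative energy gap $\eta > 0$ realized on a ball of a fixed radius $\rho$ independent of $r$. The Maximum Principle supplies non-minimality, but extracting $\eta$ requires the kink at $x_0$ to be non-degenerate and its geometry to be controllable, which follows from the strong MP ruling out tangential contact between the distinct minimizers $u$, $v$, $T_{\bar k_0}u$ and from the uniform Lipschitz bounds of Moser.
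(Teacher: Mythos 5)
The paper itself gives no proof of this lemma; it cites \cite[(6.8)]{Ba89}. Your overall template --- exhibit a compactly supported competitor on a fixed ball where $w$ fails to minimize, then amplify by translating through a symmetry group of $w$ --- is the standard Morse--Bangert idea and is surely on the right track. However, there is a genuine gap where the crucial exponent $j-1$ is produced.

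You assert that $\bar\Gamma_{t+1}$ projects to a lattice $\Lambda\subset\DR^n$ of rank $j-1$, where $j=\rank\bar\Gamma_t$. In general this is false. Since $\bar\Gamma_{t+1}=\bar\Gamma_t\cap\bar a_t^\perp$ one only gets $\rank\bar\Gamma_{t+1}\le j-1$, and the inequality can be strict: take $n=2$, $\bar a_1=\bar e_3$, so $\bar\Gamma_2=\DZ^2\times\{0\}$ has rank $j=2$, and take $\bar a_2=(1,\sqrt2,0)/\sqrt3\in\lspan\bar\Gamma_2$; then $\bar\Gamma_3=\bar\Gamma_2\cap\bar a_2^\perp=\{0\}$ has rank $0$, not $1$. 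This is a legitimate admissible system with $t=2$. In such a case your lattice $\Lambda$ is trivial, so your construction packs only $O(1)$ disjoint translates of $B(x_0,\rho)$ into $B(0,r)$, and you obtain only a fixed negative gain, not $-\delta r^{j-1}$. The exponent $j-1$ in the lemma (and throughout the paper, e.g.\ in the proof of Lemma \ref{glss}) comes from the deck group $\bar\Gamma_t/\DZ\bar k_0$ of the covering $p'$ in \eqref{eq:ac}, which has rank $j-1$ because $\bar k_0\in\bar\Gamma_t$ --- not from $\bar\Gamma_{t+1}$. But $w$ is not $\bar\Gamma_t$-invariant (because $u$ is not), so you cannot simply translate $\psi_0$ through $\bar\Gamma_t$ either; you would need a finer argument showing that the kink locus of $w$ has $(n-1)$-area growing like $r^{j-1}$ inside $B(0,r)$, exploiting the asymptotic $\bar\Gamma_t$-invariance of $u^\pm$. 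This is the missing idea.

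A smaller remark: your step~2 is more elaborate than needed and you flag it yourself as the ``main obstacle,'' but it is not one. Once one knows $w$ is not a minimizer, the existence of a compactly supported $\phi_0$ with $\Delta(w,\phi_0,\spt\phi_0)=-\eta<0$ and a radius $\rho$ with $\spt\phi_0\subset B(x_0,\rho)$ is immediate from the definition; no kink-rounding or convexity of $F$ in $p$ is required for this part.
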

  Here we will prove
  \begin{lemma}\label{glss}
    For every $r>0$ there exists $s\ge r$, a compact set $K = K_s \supset
    B(0,s)$ and a function $\phi = \phi_s \in W^{1,2}_{0}(\DR^n)$ with $\spt
    \phi \subset K$,\, $(u+\phi)\big|_{B(0,s)} = w\big|_{B(0,s)}$ such that for
    $\delta>0$ from Lemma \ref{6.8} we have
    \begin{equation}
      \label{eq:gu1}
      \Delta(u,\phi,K) < \frac \delta 2 s^{j-1}\,. 
    \end{equation}
  \end{lemma}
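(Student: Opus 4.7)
The plan is to imitate the proof of Lemma \ref{glss.1} from the one-dimensional case, replacing the interval $(-t,t)$ by a cylinder with spherical caps $Z(r,s)$ aligned with the direction $k_0/|k_0|$ in $\DR^n$ and invoking the higher-dimensional short-connection lemma \ref{l69} together with an analogue of Lemma \ref{vu.1} for solutions invariant under $\bar\Gamma_t$. First I would fix $r \ge r_0$ large enough that $B(0,r_0) \subset Z(r,s)$ for all $s>0$, so that the improving variation $\psi$ from Lemma \ref{6.8} is supported inside the cylinder; then I would let $s\to\infty$. The total to be bounded splits as
\[
\Delta(u,\phi_s,K_s) = \Delta\bigl(u,\phi_s,Z(r,s)\bigr) + \Delta\bigl(u,\phi_s,K_s\setminus Z(r,s)\bigr),
\]
and the goal is to force each summand below $\tfrac{\delta}{4}\, s^{j-1}$ by choosing $s$ large.

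\emph{Short-connection cost outside $Z(r,s)$.} Apply Lemma \ref{l69} with $u_1 = u$, $u_2 = w$, and $Z(r,s)$ in place of $[-t,t]$, producing $g_s$ which equals $w$ on $Z(r,s)$, equals $u$ outside a compact thickening $K_s$, and whose excess energy outside $Z(r,s)$ is controlled by $w-u$ restricted to $\partial Z(r,s)$. Split $\partial Z(r,s)$ into its two spherical caps (normal to $k_0$) and its cylindrical side. On the caps, Proposition \ref{4.2}(a) gives $T_{\pm i\bar k_0}u\to u^\pm$ in $C^1_\loc$, and combined with $u^-<v<u^+$ this forces $\|w-u\|_\infty\to 0$, so the cap contribution is negligible. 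On the lateral surface $w-u$ is merely uniformly bounded, and the cylinder must be designed so that the lateral area scales strictly below $s^{j-1}$. This is precisely the point \eqref{eq:ac} in the paper: one aligns the transverse cross-section of $Z(r,s)$ with the rank-$(j-1)$ sublattice of $\bar\Gamma_t$ perpendicular to $k_0$ and keeps the remaining $n-j$ non-lattice directions bounded, so that $\bar\Gamma_t$-periodicity of $v$ and $u^\pm$ cancels the bulk of the side integral.

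\emph{Energy difference inside $Z(r,s)$.} Decompose $Z(r,s)$ into a back cap $H'_s$ and front cap $H''_s$ (each one $\langle \bar k_0\rangle$-period thick in the $k_0$-direction at the respective end), and an interior body containing $H=\{u<v\le T_{\bar k_0}u\}$. The $C^1_\loc$-convergence ensures, for $s$ large, that $u\ge v$ on $H'_s$ and $T_{\bar k_0}u<v$ on $H''_s$, so that the analogue of the decomposition \eqref{wdecomp.1} applies: $w=u$ on $H'_s$, $w=T_{\bar k_0}u$ on $H''_s$, and the three pieces $v\cdot\chi_H$, $u\cdot\chi_{\{u\ge v\}\setminus H'_s}$, $T_{\bar k_0}u\cdot\chi_{\{T_{\bar k_0}u<v\}\setminus H''_s}$ arise in the middle. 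Using $\DZ^{n+1}$-invariance of $F$ and $T_{\bar k_0}$-invariance of $v$ (exactly as in \eqref{eq:a6.1}) one converts the $T_{\bar k_0}u$-integral into a $u$-integral over the shifted set; the body pieces telescope, and what remains is essentially $I(v,H)-I(u,H'_s)$. The higher-dimensional analogue of Lemma \ref{vu.1}, applied to $v$ and $u^+$ (both in $\mcm(\bar a_1,\ldots,\bar a_{t-1})$) together with the $C^1_\loc$-convergence $T_{-i\bar k_0}u\to u^+$, bounds this residue by $\tfrac{\delta}{4} s^{j-1}$ for $s$ large.

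\emph{Main obstacle.} The hardest point is the lateral contribution to the short-connection cost. In dimension one, $\partial[-t,t]$ is just two points and the cost automatically shrinks by the $C^1_\loc$-convergence $T_{\pm i\bar k_0}u\to u^\pm$; in dimension $n\ge 2$, $\partial Z(r,s)$ carries substantial $(n-1)$-dimensional area on which $w-u$ need not be small. Balancing this against the gain $\delta s^{j-1}$ of Lemma \ref{6.8} forces a careful choice of the cylinder's shape matched to the ranks inside $\bar\Gamma_t$, and it is precisely this balance---trivially automatic when $\DR k_0 = \DR^n$---that the paper will carry out through \eqref{eq:ac}.
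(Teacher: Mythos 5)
Your proposal correctly mirrors the one-dimensional outline (split $\Delta(u,\phi,K)$ into the short-connection cost outside $Z$ and the energy comparison inside, then handle the latter by a decomposition into fundamental domains and an analogue of Lemma \ref{vu.1}), but it misses the actual mechanism by which the paper controls the lateral boundary term, and this is not a cosmetic point. You propose to keep the \emph{radius} of the cylinder fixed at some $r\ge r_0$ and to send only its length $s$ to infinity, and then to ``design'' the cross-section so that the lateral area grows strictly more slowly than $s^{j-1}$, by aligning the transverse directions with a rank-$(j-1)$ sublattice of $\bar\Gamma_t$ and keeping the remaining $n-j$ non-lattice directions bounded. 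Neither half of this works. First, the lemma requires $B(0,s)\subset K$ and $(u+\phi)|_{B(0,s)}=w|_{B(0,s)}$, so the radius of the region on which $u+\phi=w$ must itself be at least $s$: you cannot fix it. Correspondingly, the gain from Lemma \ref{6.8} is $\delta s^{j-1}$ only if the ball of radius $s$ sits inside the region where $\psi$ acts; with a fixed radius you only get a fixed gain while your costs keep growing with the cylinder length. Second, on a round cylinder $Z(s,t)$ the lateral area is of order $s^{n-2}t$, which can dominate $s^{j-1}$ (and a lattice-adapted cross-section does not repair this, since the lateral area of a product cylinder $B^{j-1}(s)\times(\text{bdd})\times[-t,t]$ is still of order $t\,s^{j-2}$, i.e.\ of order $t\,s^{j-2}$ rather than $o(s^{j-1})$). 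The paper does not make the lateral \emph{area} small; it uses round cylinders and makes the lateral \emph{integral of $w-u$} small.

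The key step you are missing is the volume bound \eqref{eq:a0} and its exploitation via Lemma \ref{volslice}. The region $W''$ between $\graph(u)$ and $\graph(T_{\bar k_0}u)$ projects injectively into $\DR^{n+1}/\bar\Gamma_t$ (Proposition \ref{4.2}(b)), and the group of deck transformations of $p'$ in \eqref{eq:ac} has rank $j-1$, which gives $\vol_{n+1}(W'\cap(Z(r,\infty)\times\DR))\le c\,r^{j-1}$. Since this volume is exactly $\int_0^r\int_{C_\infty(\rho)}(w-u)\,d\sigma\,d\rho$, Lemma \ref{volslice} produces a sequence of good radii $s_i\to\infty$ with $\int_{C_\infty(s_i)}(w-u)\,d\sigma\le c\,s_i^{\,j-2}$, and a second application with $s=s_i$ fixed and $t$ varying produces good heights $t_l$ with $\int_{D_{t_l}(s)}(w-u)\,d\sigma\le\hat c/t_l$. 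Feeding these into Lemma \ref{l69}(e) gives $|\Delta(u,\phi_{i,l},Z(s,t_l)^C)|<\frac{\delta}{4}s^{j-1}$ after choosing $s$ large enough relative to the constants and $l$ large. In short: \eqref{eq:ac} is used to prove a volume estimate, not to reshape the cylinder, and both the radius and the height of $Z$ are chosen from slicing sequences. A smaller slip: when bounding the residual term $I(v,Z\cap H)-I(u,Z\cap H_l')$ you should compare $v$ with $u^-$ (via Lemma \ref{vu}) and $u^-$ with $u$ on $H_l'$ (via $C^1_\loc$-convergence $T_{n\bar k_0}u\to u^-$), not $v$ with $u^+$.
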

  \begin{proof}[Proof of Theorem \ref{graphunique}, assuming Lemma \ref{glss}]
    If there existed such a function $v$, we construct the function $w$, and the
    two Lemmas above yield compactly supported functions $\psi = \psi_s$ and
    $\phi = \phi_s,\, s > r_0,$ such that analogously to the case $n=1$
    \begin{align*}
      \Delta(u,\phi + \psi, K) < (-\delta + \frac \delta2) s^{j-1} =
      -\frac\delta2 s^{j-1} < 0\,,
    \end{align*}
    and this contradicts the minimality of $u$.
  \end{proof}

  We shall need a modification of the ``Slicing-Lemma'' \cite[Lemmas (6.8) and
  (6.9)]{Ba87}. This is necessary since we need this result not only for balls
  but also for sets featuring some periodicity in the direction of $k_0$, namely
  for the full ``cylinder with caps''
  \begin{equation*}
    Z(r,t) \mathrel{\mathop:}= \big\{ x \in \DR^n \mid d\big(x,\{\lambda k_0 \mid |\lambda| \le t\}\big)\le
    r\big\},\quad r > 0,\,t \in \DR^+ \cup \{\infty\} \,.
  \end{equation*}
  Let $C_t(r)$ denote the cylinder $\{x \in \DR^n\mid |x\cdot k_0| \le t \}\cap
  \partial Z(r,t)$ of radius $r$ and height $2t$ with ``soul'' $\DR k_0$.  Let
  $D_t(r)$ denote the set $\partial Z(r,t) \setminus C_t(r)$ that consists of
  two open $(n-1)$-half-spheres for $t<\infty$, and is empty if $t = \infty$.
  Note that $\partial Z(r,t) = C_t(r) \cup D_t(r)$ for every $r \in \DR^+, t \in
  \DR^+\cup \{\infty\}$.

  By $d\sigma$ we denote the $(n-1)$-dimensional area-element.

  \begin{lemma}\label{l69}
    Let $u_1,u_2:\DR^n \to \DR$ have Lipschitz constant $L$ and suppose $0 \le
    u_2 - u_1 \le C$ and $r \ge 1,\, t \in \DR^+ \cup \{\infty\}$. Then there
    exists a function $g: \DR^n \to \DR$ such that
    \begin{enumerate}[{\em (a)}]
    \item $g$ is Lipschitz with constant $2L+1$,
    \item $g = u_2$ inside $Z(r,t)$,
    \item $g = u_1$ outside $Z(r+C,t)$, which is compact if $t< \infty$,
    \item $\vol_n\left(\{x \in Z(r,t)^C \mid g(x) \neq u_1(x)\}\right)  \le(1+C)^{n-2}
      \int_{C_t(r)} (u_2 - u_1)(x) \,d\sigma(x)  \\ + (1+C)^{n-1} \int_{D_t(r)} (u_2 - u_1)(x)\,d\sigma(x)\,,$
    \item \label{e} there exists a constant $\tilde A = \tilde A(n,C,L,F)$
      such that
      \begin{align*}
        \bigg|&\int_{\DR^n\setminus Z(r,t)} \big(F(x,g,g_x) -
        F(x,u_1,(u_1)_x)\big)\,dx \bigg| \\
        &\le\tilde A \int_{C_t(r)} (u_2 - u_1)(x) \,d\sigma(x) + \tilde
        A \int_{D_t(r)} (u_2 - u_1)(x)\,d\sigma(x)\,.
      \end{align*}
    \end{enumerate}
  \end{lemma}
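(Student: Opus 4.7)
I would mimic the one-dimensional construction in Lemma \ref{l69.1}, replacing the nearest-point projection onto $[-t,t]$ by the nearest-point projection onto the ``capsule'' $Z(r,t)$. Since the segment $\{\lambda k_0 \mid |\lambda|\le t\}$ is convex, its $r$-neighborhood $Z(r,t)$ is also convex and closed, so the projection $\pi:\DR^n \to Z(r,t)$ is well-defined and $1$-Lipschitz. I would then set
\[
   g(x) \mathrel{\mathop:}= \max\bigl\{u_2(\pi(x)) - (L+1)\,d(x,Z(r,t)),\; u_1(x)\bigr\}.
\]
Property (a) is immediate because $u_2\circ \pi$ is $L$-Lipschitz, $d(\,\cdot\,,Z(r,t))$ is $1$-Lipschitz, and $u_1$ is $L$-Lipschitz. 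Property (b) holds because inside $Z(r,t)$ we have $\pi(x)=x$ and $d(x,Z(r,t))=0$, so the first argument of the max equals $u_2(x)\ge u_1(x)$. For (c), if $d(x,Z(r,t))>C$, then writing $p=\pi(x)$ and $\rho=|x-p|$,
\[
  u_2(p)-(L+1)\rho \;\le\; u_1(p)+C-(L+1)\rho \;\le\; u_1(x) + L\rho + C - (L+1)\rho \;=\; u_1(x) + (C-\rho) \;<\; u_1(x),
\]
so $g(x)=u_1(x)$; and $Z(r+C,t)^{C}=\{d(\,\cdot\,,Z(r,t))>C\}$.

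For (d) I would introduce ``tubular coordinates'' on $Z(r,t)^{C}$. Any $x\in Z(r,t)^{C}$ writes uniquely as $x=p+\rho\,\nu(p)$ with $p=\pi(x)\in\partial Z(r,t)$, $\rho=d(x,Z(r,t))>0$, and $\nu(p)$ the outward unit normal. The set splits naturally into the exterior of the cylinder $C_t(r)$ and the exteriors of the two spherical caps $D_t(r)$; on these pieces a direct computation gives the volume elements
\[
  dx \;=\; \Bigl(\tfrac{r+\rho}{r}\Bigr)^{n-2}\!\!d\rho\,d\sigma(p)\quad \text{on the cylinder,}\qquad
  dx \;=\; \Bigl(\tfrac{r+\rho}{r}\Bigr)^{n-1}\!\!d\rho\,d\sigma(p)\quad \text{on the caps.}
\]
On the set $\{g\ne u_1\}\cap Z(r,t)^C$ we need $u_2(p)-(L+1)\rho > u_1(x)\ge u_1(p)-L\rho$, which forces $\rho < (u_2-u_1)(p)\le C$. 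Hence the $\rho$-integration ranges over $[0,(u_2-u_1)(p)]$ and, using $r\ge 1$, the Jacobians are bounded by $(1+C)^{n-2}$ and $(1+C)^{n-1}$ respectively. Fubini then yields (d).

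Finally (e) follows from (d): because both $u_1$ and $g$ are Lipschitz with constant at most $2L+1$, and $F$ is $\DZ^{n+1}$-periodic in its first $n+1$ arguments, the integrands $F(x,g,g_x)$ and $F(x,u_1,(u_1)_x)$ are bounded by a constant $M=M(L,F)$; outside $\{g\ne u_1\}$ the integrand of (e) vanishes, so the integral is bounded by $2M\cdot\vol_n(\{g\ne u_1\}\cap Z(r,t)^C)$ and the estimate (d) gives the desired $\tilde A = \tilde A(n,C,L,F)$. The only mildly delicate point is justifying the tubular-coordinate change across the seam where the cylinder meets the caps; but since $Z(r,t)$ is convex, the normal-exponential map $(p,\rho)\mapsto p+\rho\nu(p)$ from $\partial Z(r,t)\times(0,\infty)$ onto $Z(r,t)^C$ is a bijection, and $\partial Z(r,t)=C_t(r)\cup D_t(r)$ up to an $(n-1)$-null set, so the two regions can be integrated over independently without any boundary contribution. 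This last piece is the main technical obstacle and is the true higher-dimensional analogue of the short-connection construction used in Lemma \ref{l69.1}.
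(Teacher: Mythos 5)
Your proposal is correct and follows essentially the same route as the paper: define $g$ via the nearest-point projection onto the convex capsule $Z(r,t)$, verify (a)--(c) by the same elementary Lipschitz estimates, obtain (d) by integrating in tubular (cylindric/polar) coordinates with Jacobians bounded via $r\ge 1$ and $\rho\le C$, and deduce (e) from uniform boundedness of $F$ on $\Lip(2L+1)$ functions. Your write-up is slightly more explicit than the paper's about the Jacobian factors $((r+\rho)/r)^{n-2}$, $((r+\rho)/r)^{n-1}$ and the bound $\rho<(u_2-u_1)(p)$, but the argument is the same.
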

  
  \begin{remark}\label{e2}
    Analogous statements are true if $0 \le u_1-u_2\le C$.
  \end{remark}
  
  \begin{proof} We modify Bangert's proof. Let $\operatorname{pr}: \DR^n \to
    Z(r,t)$ be the nearest point projection and define
    $$ g(x) \mathrel{\mathop:}= \max\left\{u_2\big(\operatorname{pr}(x)\big) -
      (L+1)d\big(x,Z(r,t)\big),u_1(x)\right\}\,.$$ Hence $g$ satisfies (a) and
    (b). Since $u_1$ has Lipschitz constant
    $L$ we have
    $$u_1(x) \ge u_2(\pr(x)) +\big(u_1(\pr(x)) - u_2(\pr(x))\big) - Ld(x,Z(r,t))\,,$$
    and therefore $g(x) = u_1(x)$ if $d\big(x,Z(r,t)\big) \ge u_2(\pr(x)) -
    u_1(\pr(x))$ and $g$ satisfies (c).
  
    If $\nu_x$ denotes the outer unit normal to $\partial Z(r,t)$ we consider
    the transformation maps
    \begin{align*}
      &\tilde \tau: C_t(r)\times \DR^+ \to \DR^n,\, (x,s)\mapsto x + s\,\nu_x
      \quad
      \mbox{and}\\
      &\tau: D_t(r)\times \DR^+ \to \DR^n,\, (x,s) \mapsto x + s\,\nu_x\,,
    \end{align*}
    which occur in the following integration in cylindric and polar coordinates.
    Let $J\tilde \tau$ and $J \tau$ be the corresponding Jacobians.
    \begin{align*}
      \vol_n\big(\{x \in &Z(r,t)^C \mid g(x) \neq u_1(x) \}\big)  \\
      &\le \int_{C_t(r)} \int_r^{r+(u_2 - u_1)(x)}|J\tilde\tau(x,s)|\, ds\,
      d\sigma(x) \\
      &\qquad+ \int_{D_t(r)} \int_r^{r+(u_2 -
        u_1)(x)}|J\tau(x,s)|\,ds\,d\sigma(x) \\
      &\le (1+C)^{n-2} \int_{C_t(r)} (u_2 - u_1)(x) \,d\sigma(x) \\
      &\qquad+ (1+C)^{n-1} \int_{D_t(r)} (u_2 -
      u_1)(x)\,d\sigma(x) \\
    \end{align*}
    which is estimate (d). Since $F\big(x,h(x),h_x(x)\big)$ is uniformly bounded
    for all $x \in \DR^n$ and all $h \in \Lip(2L+1)$, we obtain (e).
  \end{proof}
we will
need the following simple observation:

\begin{lemma}\label{volslice} 
  Suppose $j \in \{0\} \cup \DN$ and $f:\DR^+ \to [0,\infty)$ is a measurable
  function, $r_0>0$ and $\int_0^r f(s)\,ds \le cr^j$ for a constant $c>0$ and
  every $r>r_0$.  Then, if $i \in \DN$ is such that $2^{i+1} \ge r_0$, we obtain
  for every $k \in \DN$
  \begin{equation*}
    \label{eq:volslice}
    \mcl^1\bigg( \big\{f(s) > 2^{j+1}ck\,s^{j-1}\big\} \cap [2^i,2^{i+1})\bigg) <
    \frac 1k\,2^i\,. 
  \end{equation*}
  Especially there exists a constant $\tilde c >0$ and a sequence $(s_i)_{i \in
    \DN}$ with $s_i \to \infty$ as $i \to \infty$ such that $f(s_i) < \tilde c\,
  s_i^{j-1}\,.$
\end{lemma}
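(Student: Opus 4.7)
The plan is a dyadic decomposition combined with a Chebyshev-type estimate. For fixed $i$ with $2^{i+1}\ge r_0$ and fixed $k\in\DN$, introduce the ``bad set''
$$A_i \mathrel{\mathop:}= \big\{s\in [2^i,2^{i+1}) \;\big|\; f(s) > 2^{j+1}ck\,s^{j-1}\big\}\,.$$
I will bound $\int_{A_i} f$ from above via the hypothesis and from below via the defining inequality of $A_i$; comparing these two bounds directly yields the measure estimate.

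For the upper bound, monotonicity and the hypothesis (valid since $2^{i+1}\ge r_0$) give
$$\int_{A_i} f(s)\,ds \;\le\; \int_0^{2^{i+1}} f(s)\,ds \;\le\; c\,(2^{i+1})^j \;=\; c\,2^j\,2^{ij}\,.$$
For the lower bound, I need a pointwise lower estimate on $s^{j-1}$ for $s\in[2^i,2^{i+1})$. Here the only mild subtlety is that the monotonicity of $s\mapsto s^{j-1}$ depends on whether $j\ge 1$ or $j=0$: if $j\ge 1$ then $s^{j-1}\ge 2^{i(j-1)}$, while if $j=0$ then $s^{-1}>2^{-(i+1)}=\tfrac 12\,2^{i(j-1)}$. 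In both cases one has $s^{j-1}\ge \tfrac12\,2^{i(j-1)}$, hence
$$\int_{A_i} f(s)\,ds \;>\; 2^{j+1}ck\cdot\tfrac12\cdot 2^{i(j-1)}\,\mcl^1(A_i) \;=\; 2^j ck\,2^{i(j-1)}\,\mcl^1(A_i)\,.$$
Combining the two bounds and cancelling $c\,2^j\,2^{i(j-1)}$ leaves $\mcl^1(A_i) < 2^i/k$, which is the claimed estimate.

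For the ``especially'' part I specialise to $k=1$. Then $\mcl^1(A_i)<2^i=\mcl^1([2^i,2^{i+1}))$, so the complement $[2^i,2^{i+1})\setminus A_i$ is nonempty; pick any $s_i$ in it. By definition of $A_i$ we have $f(s_i)\le 2^{j+1}c\,s_i^{j-1}$, and setting $\tilde c\mathrel{\mathop:}= 2^{j+2}c$ turns this into the strict inequality $f(s_i)<\tilde c\,s_i^{j-1}$. Restricting to indices $i$ with $2^{i+1}\ge r_0$ and letting $i\to\infty$ forces $s_i\ge 2^i\to\infty$, giving the required sequence.

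There is no real obstacle here; the argument is entirely routine. The only point where one must be careful is the uniform treatment of the two exponent regimes $j\ge 1$ and $j=0$, which is handled by the single inequality $s^{j-1}\ge \tfrac12\,2^{i(j-1)}$ valid on $[2^i,2^{i+1})$ in both cases.
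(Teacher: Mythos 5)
Your proof is correct and follows essentially the same dyadic Chebyshev argument as the paper's, comparing the integral over $[2^i,2^{i+1})$ against the hypothesis at scale $2^{i+1}$. The only difference is stylistic: the paper splits into the cases $j=0$ and $j\ge 1$ and argues by contradiction, whereas you give a direct two-sided estimate and treat both exponent regimes uniformly via $s^{j-1}\ge\tfrac12\,2^{i(j-1)}$ on $[2^i,2^{i+1})$, which is a tidy consolidation (the strict inequality $\int_{A_i}f>\cdots$ of course needs $\mcl^1(A_i)>0$, but the complementary case is vacuous).
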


\begin{proof}
  $(j = 0)$: If for $i \in \DN$ with $2^{i+1}\ge r_0$ the estimate was false,
  then
  $\int_{2^i}^{2^{i+1}}f(s)\,ds > \frac 1k  2^{i}\cdot 2 \cdot ck 2^{-(i+1)} = c\,,$
  which contradicts $\int_0^{2^{i+1}} f(s)\,ds\le c$.
  
  $(j \ge 1)$: If for $i \in \DN$ the estimate was not true, we calculate
    \begin{align*}
      c\,2^{(i+1)j}&\ge \int_0^{2^{i+1}} f(s)\,ds \ge \int_{2^i}^{2^{i+1}}f(s)\,ds\\
      &> \frac 1k 2^i\cdot 2^{j+1} ck \cdot 2^{i(j-1)} = c \cdot 2^{(i+1)j+1}
      \,.
    \end{align*}
    Division by $2^{(i+1)j}$ yields the contradiction $c > 2 c\,.$
  \end{proof}
 
  \begin{lemma}\label{vu}
    Consider the action $T'$ of $\DZ k_0$ on $\DR^n$, given by $T'_{k} x = x +
    k$ for every $k \in \DZ k_0$.  Consider $u_1, u_2 \in \mcm(\bar a_1, \ldots,
    \bar a_{t-1})$ with $u_1\le u_2$.  Suppose $T_{\bar k} u_1 \ge u_2$ whenever
    there exists $s \in \{1,\ldots,t-1\}$ such that $\bar k \in \bar \Gamma_s$
    and $\bar k \cdot \bar a_s > 0$, and let $H_1,H_2$ be fundamental domains of
    $T'$.  Then there exists a sequence $s_i \to \infty$ and a constant $c_0>0$
    such that
    $$|I(u_1,Z(s_i,\infty) \cap H_1) - I(u_2,Z(s_i,\infty) \cap H_2)|<
    c_0\,s_i^{j-2}\,.$$
  \end{lemma}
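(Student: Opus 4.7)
The strategy carries over the argument of Lemma \ref{vu.1} to higher dimensions, replacing the interval $(-t,t)$ by cylinders-with-caps $Z(r,N)$. Fix $r > 0$ and an integer $N$, and set $C \mathrel{\mathop:}= \sup_{\DR^n}(u_2 - u_1) < \infty$, which is finite because $u_1$ and $u_2$ lie at bounded Hausdorff distance from a common hyperplane normal to $\bar a_1$. Lemma \ref{l69} (together with Remark \ref{e2} for the reversed roles) furnishes two Lipschitz competitors: $g_N$ equal to $u_2$ on $Z(r,N)$ and to $u_1$ outside $Z(r+C,N)$, and $\tilde g_N$ equal to $u_1$ on $Z(r,N)$ and to $u_2$ outside $Z(r+C,N)$. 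Using them as test variations in the minimality of $u_1$ and of $u_2$ respectively on the compact set $Z(r+C,N)$, combined with the surface-type estimate of Lemma \ref{l69}(e), yields
\[
\bigl| I(u_1,Z(r,N)) - I(u_2,Z(r,N)) \bigr| \le 2\tilde A \int_{C_N(r)} (u_2-u_1)\,d\sigma + 2\tilde A \int_{D_N(r)} (u_2-u_1)\,d\sigma.
\]

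Next I would exploit the $T'$-periodicity of both $F(\cdot,u_i,(u_i)_x)$ and $u_2-u_1$ (both $u_i$ are $\bar\Gamma_t$-invariant by Proposition \ref{graphbasic} and $F$ is $\DZ^{n+1}$-periodic). Let $\hat Z(r,N)$ denote the straight cylinder of radius $r$ and height $2N|k_0|$; it is the disjoint union of $2N$ $T'$-translates of $Z(r,\infty)\cap H_1$, so $I(u_i,\hat Z(r,N)) = 2N\cdot I(u_i, Z(r,\infty)\cap H_i)$, and analogously $\int_{C_N(r)}(u_2-u_1)\,d\sigma = 2N\,\Phi(r)$, where
\[
\Phi(r) \mathrel{\mathop:}= \int_{C_\infty(r)\cap H_1}(u_2-u_1)\,d\sigma.
\]
The discrepancy $Z(r,N)\setminus \hat Z(r,N)$ consists of two half-balls of radius $\le r$, contributing at most $O(r^n)$ to each energy, while $\int_{D_N(r)}(u_2-u_1)\,d\sigma = O(r^{n-1})$; all these bounds are independent of $N$. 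Dividing the previous estimate by $2N$ and letting $N\to\infty$ kills the cap contributions and leaves
\[
\bigl| I(u_1,Z(r,\infty)\cap H_1) - I(u_2,Z(r,\infty)\cap H_2) \bigr| \le 2\tilde A\,\Phi(r).
\]

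It now suffices to produce a sequence $s_i\to\infty$ with $\Phi(s_i)\le \tilde c\,s_i^{j-2}$, which I would obtain from Lemma \ref{volslice} applied to $\Phi$ with exponent $j$. This requires the volume bound
\[
\int_0^r \Phi(s)\,ds = \int_{Z(r,\infty)\cap H_1}(u_2-u_1)(x)\,dx \le c\,r^{j-1}
\]
for all large $r$, and this is the principal obstacle. The naive estimate based on $u_2-u_1\le C$ and $\vol_n(Z(r,\infty)\cap H_1)\sim r^{n-1}$ yields only $O(r^{n-1})$, which is too weak when $j<n$. The necessary refinement exploits that $u_2-u_1$ is invariant under the full rank-$j$ lattice $\Gamma_t^{\mathrm{proj}}\subset\DR^n$ obtained by projecting $\bar\Gamma_t$ to the first $n$ coordinates (the $(n+1)$-th shifts cancel in the difference), together with the ordering hypothesis $u_2-u_1\le T_{\bar k}u_1-u_1$ for every positive $\bar k\in\bar\Gamma_s$, $s<t$: unfolding the $\Gamma_t^{\mathrm{proj}}$-action on $Z(r,\infty)\cap H_1$ reduces the integration in the $j$ periodic directions to an $\sim r^{j-1}$ cross-section, while the ordering is used to control the behaviour in the complementary subspace $V^\perp\subset\lspan\{a_1,\ldots,a_{t-1}\}$. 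Once this bound is in hand, Lemma \ref{volslice} produces the desired sequence and the claim holds with $c_0 = 2\tilde A\,\tilde c$.
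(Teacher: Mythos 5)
Your overall strategy is the same as the paper's: produce short-connection competitors with Lemma~\ref{l69} (and Remark~\ref{e2}) on the finite cylinders $Z(r,t)$, use $T'$-periodicity of $u_1,u_2$ and of $F$ to factor out the height, divide by the number of periods and let the height tend to infinity, and then pick good radii with Lemma~\ref{volslice}. Your reorganization -- passing to the limit in $N$ first, so that Lemma~\ref{volslice} is applied once to the $N$-independent function $\Phi(r)=\int_{C_\infty(r)\cap H_1}(u_2-u_1)\,d\sigma$ -- is arguably a cleaner presentation than the paper's, which selects the radii $s_i$ at the level of the $n$-dependent boundary integrals and must keep track of uniformity in $n$. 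Up to this point everything you write is correct.

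The gap is exactly where you say it is: the volume bound $\int_0^r\Phi(s)\,ds=\int_{Z(r,\infty)\cap H_1}(u_2-u_1)\,dx\le c\,r^{j-1}$. What you offer for it is not an argument. Periodicity of $u_2-u_1$ under $\Gamma_t^{\mathrm{proj}}$ does not by itself ``reduce the integration in the $j$ periodic directions to an $\sim r^{j-1}$ cross-section'' -- a periodic nonnegative integrand over a region of volume $\sim r^{n-1}$ still integrates to $\sim r^{n-1}$ unless there is a separate decay input -- and ``the ordering is used to control the behaviour in the complementary subspace $V^\perp$'' is a wish, not a step. The pointwise bound $u_2-u_1\le T_{\bar k}u_1-u_1$ does not decay along $V^\perp$; what it does, when combined for all admissible $\bar k$, is encode a global incidence constraint. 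The paper makes that constraint usable in one line: the ordering hypothesis forces the open region $W=\{(x,x_{n+1}):u_1(x)<x_{n+1}<u_2(x)\}$ to project \emph{injectively} into $\DR^{n+1}/\bar\Gamma_t$, and together with the $\DZ\bar k_0$-invariance of $W$ this yields $\vol_{n+1}\big(W\cap(Z(r,t_n)\times\DR)\big)\le\tilde c\,n\,r^{j-1}+\tilde c\,r^j$, equivalently (after the $n\to\infty$ normalization) your desired $\int_0^r\Phi(s)\,ds\le c\,r^{j-1}$. That injectivity statement is the missing idea: it is what converts the family of pointwise inequalities into a volume bound with the correct exponent $j-1$, and without formulating it your heuristic cannot be completed. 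Once you insert it, the rest of your proposal goes through.
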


  \begin{proof} 
    For every $v \in \mcm(\bar a_1, \ldots, \bar a_{t-1})$ and every $r > 0$ and
    any two fundamental domains $H_1, H_2$ of $T'$ we have $I(v, Z(r, \infty)
    \cap H_1) = I(v, Z(r, \infty) \cap H_2)$. Thus, it suffices to give the
    proof for
    $$H_1 = H_2 = \{x \in \DR^n \mid 0 \le x\cdot k_0 < |k_0|\} =\mathrel{\mathop:}H_0\,.$$
    The idea is as follows: $\vol \big(Z(r,t)\big)$ grows like $ts^{j-1}$ and
    $\vol \big(Z(r,t) \cap H_0\big)$ grows like $s^{j-1}$. By ``short
    connections'' and minimality of $u_1$ and $u_2$ we obtain the desired
    estimate.

    For $n \in \DN$ we set $t_n \mathrel{\mathop:}= n|k_0|$. For every $r,n > 0$
    we let $g_{r,n}$ be the functions provided by Lemma \ref{l69} and set
    $\phi_{r,n} = g_{r,n} - u_1$.  Minimality of $u_1$ implies
    \begin{align*}
      I\big(u_1, Z(r,t_n)\big) &+ I\big(u_1, \spt \phi_{r,n} \setminus
      Z(r,t_n)\big) = I\big(u_1, Z(r,t_n) \cup
      \spt \phi_{r,n}\big) \\
      &\le I\big(u_1 + \phi_{r,n}, Z(r,t_n) \cup \spt \phi_{r,n}\big) \\
      &= I\big(u_2, Z(r,t_n)\big) + I\big(u_1 + \phi_{r,n}, \spt \phi_{r,n}
      \setminus Z(r,t_n)\big)\,.
    \end{align*}
    Hence
    \begin{align}\begin{split}
        \label{eq:k01}
        I\big(&u_1, Z(r,t_n)\big) -  I\big(u_2, Z(r,t_n)\big) \\
        &\le \big|I\big(u_1 + \phi_{r,n}, \spt \phi_{r,n} \setminus
        Z(r,t_n)\big) - I\big(u_1, \spt \phi_{r,n} \setminus
        Z(r,t_n)\big)\big|\,.
      \end{split}
    \end{align}

    By the assumption that $T_{\bar k} u_1 \ge u_2$ whenever there exists $s \in
    \{1,\ldots,t-1\}$ such that $\bar k \in \bar \Gamma_s$ and $\bar k \cdot
    \bar a_s > 0$, the set 
    $$W \mathrel{\mathop:}= \big\{ (x,x_{n+1}) \in \DR^{n+1} \mid u_1(x) <
    x_{n+1} < u_2(x) \big\}$$ projects injectively into $\DR^{n+1}/\bar
    \Gamma_t$. Furthermore, $W$ is $\DZ \bar k_0$-invariant and we obtain the
    following volume-growth estimate: There is a constant $\tilde c > 0$,
    independent of $n \in \DN$, such that
    \begin{align}\begin{split}\label{eq:k00}
        \vol\big(W \cap(Z(r,t_n) \times \DR\big) \le \tilde c\,n\,r^{j-1} + \tilde
        c\,r^j\,. \end{split}
    \end{align}
    Since the left hand side of this estimate equals the integral
    $$ \int_0^r\bigg(\int_{C_{t_n}(s)}(u_2-u_1)(x)\,d\sigma(x) +
    \int_{D_{t_n}(s)} (u_2-u_1)\,d\sigma(x)\bigg)\,ds\,,$$ Lemma \ref{volslice}
    yields a sequence $s_i \to \infty$ and a constant $c'>0$
    such that
    \begin{align*}
      \int_{C_{t_n}(s_i)} (u_2-u_1)\,d\sigma(x) &\le c'\,n
      s_i^{j-2}\qquad \mbox{and }\\
      \int_{D_{t_n}(s_i)} (u_2-u_1)\,d\sigma(x) &\le
      c'\,s_i^{j-1}\,
    \end{align*}
    for every $n \in \DN$.  By Lemma \ref{l69} (e) there is a constant $c'' >0$
    such that
    \begin{align*}\begin{split}\label{eq:k02}
        \big|I\big(u_1+\phi_{s_i,n},\spt \phi_{s_i,n}\setminus Z(s_i,t_n)\big)
        &-
        I\big(u_1,\spt \phi_{s_i,n}\setminus Z(s_i,t_n)\big)\big|\\
        &\le c'' \,n\,s_i^{j-2} + c''\, s_i^{j-1}\,.
      \end{split}
    \end{align*}
    Together with estimate \eqref{eq:k01} this implies
    $$  I\big(u_1, Z(s_i,t_n)\big) -  I\big(u_2, Z(s_i,t_n)\big)\le  c'' \,n\,s_i^{j-2} +
    c''\, s_i^{j-1}\,.$$ Using Remark \ref{e2} we infer
    \begin{equation}
      \label{eq:k06}
      \big| I\big(u_1, Z(s_i,t_n)\big) -  I\big(u_2, Z(s_i,t_n)\big) \big| \le  c'' \,n\,s_i^{j-2} +
      c''\, s_i^{j-1}\,.
    \end{equation}
    Consider a fixed $i \in \DN$. By the $\DZ \bar k_0$-invariance of $u_1$ and
    $u_2$ we obtain for $j = 1,2$
    \begin{equation*}
      I\big(u_j,Z(s_i,t_n)\big) = 2n \,I\big(u_j,Z(s_i,\infty) \cap H_0\big) + 2\,
      I\big(u_j, Z(s_i,t_n) \setminus \{ x \mid |x\cdot k_0| \le t_n\}\big)\,.
    \end{equation*}
    The modulus of the second term on the right hand side equals a constant
    $c^j$ depending on $s_i$ but not on $n$. Set $c''' =
    5\,\max\{|c^1|,|c^2|,c''s_i^{j-1}\}$, and infer from \eqref{eq:k06}
    \begin{align*}
      c'' \,n\,s_i^{j-2} + c'''&\ge 2n \, \big|I\big(u_1,Z(s_i,t_n) \cap
      H_0\big) - I\big(u_2,Z(s_i,t_n) \cap H_0\big)\big| \,.
    \end{align*}
    Considering $n \to \infty$, we infer $\big|I\big(u_1,Z(s_i,\infty)\cap
    H_0\big) - I\big(u_2,Z(s_i,\infty) \cap H_0\big)\big| \le c_0
    \,s_i^{j-2}\,.$
  \end{proof}
  
  \begin{proof}[Proof of Lemma \ref{glss}] 
    We define the sets
    \begin{align*}
      W' &\mathrel{\mathop:}= \left\{ (x,x_{n+1}) \in \DR^{n+1} \mid u(x) < x_{n+1} < w(x) \right\} \\
      W'' &\mathrel{\mathop:}= \left\{ (x,x_{n+1}) \in \DR^{n+1} \mid u(x) < x_{n+1} < T_{\bar
          k_0}u(x) \right\}\,,
    \end{align*}
    and consider the coverings
    \begin{equation}
      \label{eq:ac}
      \DR^{n+1} \stackrel{\pi}{\longrightarrow}\DR^{n+1}\big/\DZ \bar k_0
      \stackrel{p'}{\longrightarrow} \DR^{n+1}\big/\bar\Gamma_t\stackrel{\hat
        p}{\longrightarrow} T^{n+1}\,.  
    \end{equation}
    
    By Proposition \ref{4.2}(b) $\hat p$ maps $p' \big(\pi (W'')\big)$
    injectively into $T^{n+1}$.  The group of deck transformations of $p'$ is of
    rank $j-1$, thence  
    \begin{equation*}
      \vol_{n+1}\bigg(\pi\big(W'' \cap (Z(r,\infty) \times \DR)\big)\bigg) \le
      cr^{j-1}\,
    \end{equation*}
    for some constant $c > 0$. Since $\pi\big|_{W''}$ is injective and $W'
    \subset W''$, we have
    \begin{equation}\label{eq:a0}
      \vol_{n+1} \bigg(W'\cap \big(Z(r, \infty)\times\DR\big)\bigg)\le
      cr^{j-1}\,.
    \end{equation} 

    Now we fix the radius $s$ of $Z(s,t)$: Integration in cylindric coordinates
    and Lemma \ref{volslice} implies that there exists a sequence $s_i \to
    \infty$, and a constant $c>0$ such that
    \begin{equation}
      \label{eq:a14}
      \int_{C_\infty(s_i)} (w-u)(x)\,d\sigma(x) \le cs_i^{j-2}\,.      
    \end{equation}
    \begin{remark}
      Lemma \ref{volslice} allows us to choose the same sequence $s_i\to \infty$
      here and in Lemma \ref{vu}, and we do so.
    \end{remark}
    From now
    on let $i$ be fixed (but arbitrarily large) such that
    \begin{equation}
      \label{eq:sgross}
      s\mathrel{\mathop:}=s_i >\max\left\{\frac{8c\tilde A}\delta,\frac{8 c_0}\delta\right\}\,,  
    \end{equation}    
    where $c_0$ is the constant from Lemma \ref{vu} and $\delta$ from Lemma
    \ref{6.8}. Then
    \begin{align}
      \label{eq:a5}
      c_0s^{j-2} &< \frac \delta8 s^{j-1}\,. 
    \end{align}

    We fix the height $t$ of $Z(s,t)$: By \eqref{eq:a0},
    $\vol_{n+1}\bigg(W'\cap\big(Z(s,\infty) \times \DR\big)\bigg)<\infty$, Lemma
    \ref{volslice} yields a sequence $t_l \to \infty$ and a constant $\hat c >
    0$ with
    $$\int_{D_{t_l}(s)} (w-u)(x)\,d\sigma(x) \le \frac {\hat
      c}{t_l}\,.$$ This estimate together with \eqref{eq:a14} and Lemma
    \ref{l69} (e) yield functions $\phi_{i,l}$ with $(u +
    \phi_{i,l})\big|_{Z(s,t_l)} = w\big|_{Z(s,t_l)}$ and
    $\Delta\big(u,\phi_{i,l},Z(s,t_l)^C\big)< c\tilde A \,s^{j-1} + \frac{\hat c
      \tilde A}{t_l}$. We choose $l_0$ so large that for every $l \ge l_0$ we
    have $\frac{\hat c \tilde A}{t_l} < \frac \delta 8 s^{j-1}$.  Together with
    \eqref{eq:sgross} we infer
    \begin{equation}
      \label{eq:a2}
      \big|\Delta\big(u,\phi_{i,l},Z(s,t_l)^C\big)\big|< \frac {\delta}4s^{j-1}\,.  
    \end{equation}
    This estimates the ``energy costs of the short connections'' outside
    $Z(s,t_l)$.

    Now we will compare the energies of $u$ and $w$ inside $Z(s,t_l)$.  The set
    $$H\mathrel{\mathop:}= \big\{x \in \DR^n \mid u(x)<v(x)\le T_{\overline k_0}u\big\}$$ is a
    measurable fundamental domain of the action $T'$ of $\DZ k_0$ on $\DR^n$ and
    we consider two more measurable fundamental domains $H_l', H_l''$ that
    satisfy
    \begin{align*}
      &Z(s,t_l) \setminus T'_{-k_0} Z(s,t_l) \subset H_l'\quad \mbox{and}\\
      &Z(s,t_l) \setminus T'_{k_0} Z(s,t_l) \subset H_l''\,.
    \end{align*}
    By the convergence provided by Proposition \ref{4.2}(a) and by continuity of
    $F$ there exists an integer $l_1\ge l_0$ such that for every $l \ge l_1$:
    \begin{equation*}
      \big|I(u^-,Z(s,t_l)\cap H_l') - I(u,Z(s,t_l)\cap H_l')\big|
      < \frac \delta{8}  s^{j-1}\,.  
    \end{equation*} 
    Together with Lemma \ref{vu} and \eqref{eq:a5}, this implies that
    \begin{equation}
      \label{eq:a3}
      \big|I(v,Z(s,t_l)\cap H) - I(u,Z(s,t_l)\cap H_l')\big| < \frac
      {\delta}{4} s^{j-1}\,.
    \end{equation}
    
    By the assumed periodicity of $u^\pm$ and $v$, there exists a constant
    $\delta'>0$ such that $|u^\pm(x)-v(x)| >\delta'$ on $Z(s,\infty) \cap H_l'$
    for every $l \in \DN$.  Thus the above-mentioned convergence result implies
    that there exists an integer $l_2 \ge l_1$ such that for all $l \ge l_2$
    \begin{align}\begin{split}\label{eq:gss9}
        Z(s,t_l) \cap H_l' \cap \{u \ge v\} = \emptyset = Z(s,t_l) \cap H_l''
        \cap \{T_{\bar k_0} u < v\}\,.\end{split}
    \end{align}
    Set $K \mathrel{\mathop:}= Z(s,t_{l_2})$ and $\phi = \phi_s = \phi_{i,l_2}$ and observe
    \begin{align*}
      w\cdot\chi_K &= v\cdot\chi_{K\cap H} + u\cdot\chi_{K\cap \{u \ge v\}} + T_{\bar k_0}
      u\cdot\chi_{K \cap \{T_{\bar k_0}u<v\}}\\
      &= v\cdot\chi_{K\cap H} + u\cdot\chi_{K\cap H_l' \cap \{u \ge v\}} +
      u\cdot\chi_{(K\setminus H_l')\cap \{u \ge v\}} \\
      &\qquad T_{\bar k_0} u\cdot\chi_{K \cap H_l''\cap \{T_{\bar k_0}u<v\}} +
      T_{\bar k_0} u\cdot\chi_{(K \setminus H_l'')\cap \{T_{\bar k_0}u<v\}}\,. 
    \end{align*}
    Furthermore periodicity of $F$ yields
    \begin{equation}\label{eq:a6}
      I\big(T_{\bar k_0} u,  (K\setminus H_l'') \cap \{T_{\bar k_0}
      u < v\}\big) = I\big(u, (K\setminus H_l')\cap \{u < v\}\big) \,. 
    \end{equation}
    The above decomposition of $w\cdot\chi_K$ and \eqref{eq:a3}, \eqref{eq:gss9}
    and \eqref{eq:a6} gives
    \begin{align*}
      I\big(w,K\big) &< I\big(u,K\cap H_l'\big) + \frac
      {\delta}{4}s^{j-1} + 0+ I\big(u,(K\setminus
      H_l')\cap \{u \ge
      v\}\\
      &\qquad +0+ I\big(u, K\setminus H_l' \big)\cap \{u < v\}\big) \\
      &= I\big(u,K\big) + \frac \delta 4 s^{j-1}\,.
    \end{align*}
    Together with $(u+\phi)\cdot\chi_{K \cup \spt \phi} = w\cdot\chi_K +
    (u+\phi)\cdot\chi_{\spt \phi \setminus K}$ and \eqref{eq:a2}, this implies
    $I\big(u+\phi,K\cup \spt \phi\big) < I\big(u,K\cup \spt \phi\big) + \frac
    \delta 2 s^{j-1}$.
    \end{proof}
\bibliographystyle{siam}    
\bibliography{lit}            
\end{document}